\documentclass[12pt]{amsart}
\usepackage{latexsym}
\usepackage{graphicx, color, epsfig, verbatim, subcaption, epsf, enumitem} 
\usepackage{amssymb, amsmath,mathrsfs}
\usepackage{graphicx}
\usepackage{tikz}
\usepackage{pgfplots}
\usepgfplotslibrary{fillbetween}
\newcommand\beq{\begin{equation}}

\newcommand\eeq{\end{equation}}

\usepackage{float}

\usepackage{setspace}
\onehalfspacing
\setcounter{tocdepth}{1}
\makeatletter
\@namedef{subjclassname@2020}{%
  \textup{2020} Mathematics Subject Classification}

\linespread{1.16}

\usepackage[pagebackref,hypertexnames=false, colorlinks, citecolor=red, linkcolor=red]{hyperref}

\newcommand{\McC}{\raise.5ex\hbox{c}}

\newcommand\bbm{\begin{bmatrix}}
\newcommand\ebm{\end{bmatrix}}
\newcommand\dd{\mathrm d}

\newtheorem{theorem}{Theorem}[section]

\newtheorem{lemma}[theorem]{Lemma}
\newtheorem{conjecture}[theorem]{Conjecture}

\newtheorem*{theorem*}{Theorem}
\newtheorem*{conjecture*}{Conjecture}
\newtheorem{corollary}[theorem]{Corollary}
\newtheorem*{corollary*}{Corollary}

\newtheorem*{proposition*}{Proposition}

\def\bb{\begin{color}{blue}}
\def\bg{\begin{color}{green}}
\def\br{\begin{color}{red}}
\def\eg{\end{color}}
\def\er{\end{color}}
\def\eb{\end{color}}

\theoremstyle{remark}
\newtheorem{remark}[theorem]{Remark}


\raggedbottom

\author[Bickel]{Kelly Bickel$^\dagger$}
\address{Department of Mathematics, Bucknell University, 360 Olin Science Building, Lewisburg, PA 17837, USA.}
\email{kelly.bickel@bucknell.edu}
\thanks{$\dagger$ Research supported in part by National Science Foundation DMS grant \#2000088.}

\author[Pascoe]{J. E. Pascoe$^\ddagger$}
\address{Department of Mathematics, University of Florida, 1400 Stadium Rd, Gainesville, FL 32611, USA.}
\email{pascoej@ufl.edu}
\thanks{$\ddagger$ Research supported by National Science Foundation DMS grant \#1953963.}

\author[Sargent]{Meredith Sargent$^*$}
\address{University of Manitoba, Department of Mathematics, 420 Machray Hall, 186 Dysart Road University of Manitoba, Winnipeg, MB R3T 2N2, Canada.}
\email{meredithsargent@gmail.com}
\thanks{$*$ Research conducted for this paper is supported by the Pacific Institute for the Mathematical Sciences (PIMS). The research and findings may not reflect those of the Institute}

\keywords{Zero-free regions, entire functions of finite order, Laguerre-P\'olya class}
 \subjclass[2020]{Primary 30D15, 11M26 Secondary 93D09, 47B35, 30E20, 32A70, 11M50}
 %
\begin{document}
\title[Zero-free regions]{Zero-free regions near a line}
\date{\today}
\begin{abstract}
 We analyze metrics for how close an entire function of genus one is to being real rooted. These metrics arise from truncated Hankel matrix positivity-type conditions built from power series coefficients at each real point.  Specifically, if such a function satisfies our positivity conditions and has well-spaced zeros,  we show that all of its zeros have to (in some explicitly quantified sense) be far away from the real axis. 

The obvious interesting example arises from the Riemann zeta function, where our positivity conditions yield a family of relaxations of the Riemann hypothesis. One might guess that as we tighten our relaxation, the zeros of the zeta function must be close to the critical line. We show that the opposite occurs:  any potential complex zeros
are forced to be farther and farther away from the critical line.

\end{abstract}
\maketitle


\section{Introduction}
\subsection{Motivation and setup} An entire function of genus one is a function of the form
		\beq \label{eqn:entiref}
			f(z) = z^\ell e^{d_1+d_2z}\prod_{i=1}^{\infty} \left( 1 - \frac{z}{\lambda_i} \right) e^{z/\lambda_i},
		\eeq
	where $\ell$ is a nonnegative integer and $\sum_i \tfrac{1}{|\lambda_i|^2} <\infty$. If both $d_2$ is real and all of the roots $\lambda_i$ are real, then $f$ is in the Laguerre-P\'olya class, the class of entire functions that are locally, uniformly limits of sequences of polynomials with real zeros  \cite{debrangesentire}.  As an aside, every function $h$ in the Laguerre-P\'olya class actually satisfies $h(z) = e^{dz^2} f(z)$, where $f$ is of form \eqref{eqn:entiref} with $d_2$ and the $\lambda_i$ real and  $d \le 0$.
 The goal of this investigation, broadly speaking, is to detect how close an entire function of genus one is to being real rooted by developing a hierarchy of relaxations of the Laguerre-P\'olya class (when $d=0$).  

To motivate our Laguerre-P\'olya relaxations, first note that given $f$ in \eqref{eqn:entiref}, its negative log derivative has  formula 
		$$g(z):= -\frac{d}{dz}\log f(z) =  \frac{-\ell}{z} - d_2 +
		\sum_{i=1}^{\infty} \frac{z}{ \lambda_i(\lambda_i - z)}.$$
Elementary calculations shows that $g(z) + d_2$ maps the upper half plane to itself and the lower half plane to itself if and only if all of the $\lambda_i$ are real.
	Expanding the Laurent series for the logarithmic derivative at $0$ gives
		$$g(z) = \frac{-\ell}{z} - d_2 + \sum^{\infty}_{n=1} a_n z^n,$$
	where $a_n = \sum \frac{1}{\lambda_i^{n+1}}.$
	To connect this to a matrix condition,  define the measure $\mu = \sum \frac{1}{\lambda_i^2} \delta_{1/\lambda_i}$ on $\mathbb{C}$ and
	note that the measure $\mu$ is positive if and only if all of the $\lambda_i$ are real. Moreover, 
	its moments satisfy
		$$a_n = \int w^{n-1} \dd \mu (w).$$
Write the infinite Hankel matrix
		\beq \label{hankelone}
			A=\bbm 
			a_{1} & a_2 & a_3 & \ldots \\
			a_{2} & a_3 & a_4 & \ldots\\
			a_{3} & a_4 & a_5 & \ldots\\
			\vdots & \vdots & \vdots & \ddots
			\ebm_{i,j }
			= \bbm \int w^{i+j-2}\dd \mu(w) \ebm_{i,j}.
		\eeq
	Nevanlinna's solution to the Hamburger moment problem \cite{nev22} implies
	that the infinite matrix $A$ in \eqref{hankelone} is positive semidefinite
	if and only if $\mu$ is positive if and only if all of the $\lambda_i$ are real.
	
	For each $x \in  \mathbb{R} \setminus \{\lambda_i\}$, one can similarly expand $g(z+x) = \sum^{\infty}_{n=0} a_n(x)z^n$ with $a_n(x) =  \sum \frac{1}{(\lambda_i-x)^{n+1}}$ for $n>0$
	and use analogues of the previous arguments to deduce that all of the $\lambda_i$ are real if and only if the infinite matrix 	$A(x)$ with entries $A(x)_{i,j} = a_{i+j-1}(x)$ 
	is positive semidefinite. Thus, $f$ is in the Laguerre-P\'olya class if and only if $d_2\in \mathbb{R}$ and any (or 
	every) such $A(x)$ is positive semidefinite.
	
	Our relaxations of the Laguerre-P\'olya class involve truncations
	of \eqref{hankelone} and the more general $A(x)$, as in the classical results of Dobsch-Donoghue \cite{Donoghue,Dobsch, 	bhatia} on L\"owner's theorem and $N$-matrix monotonicity. Specifically, for each $N \in \mathbb{N}$,
	we say $f$ is in the  \textbf{N-th order Laguerre-P\'olya class} (denoted \textbf{$\mathbf{N}$-LP}) if $d_2 \in \mathbb{R}$ and the matrix inequality
	 \begin{equation} \label{eqn:Nmatrix} A_N(x)= \begin{bmatrix} a_1(x) & a_2(x) & \dots & a_{N}(x)  \\
 a_2(x) & a_3(x) & & \\
 \vdots & &\ddots &  \vdots\\
 a_{N}(x) & &\dots & a_{2N-1}(x) 
  \end{bmatrix} \ge 0\end{equation}
holds for all $x \in \mathbb{R} \setminus  \{ \lambda_i\}$. Here, the notation $A_N(x) \ge 0$ means $A_N(x)$ is a positive semidefinite matrix.  Also, it is worth noting that these $N$-LP classes are nested and if $f$ is in $1$-LP, then any non-real zeros of $f$ must come in complex-conjugate pairs (so that $f$ is real on the real line), see Lemma \ref{lem:1O} for details.

As an aside, it is worth noting that all functions whose negative logarithmic derivative is a self-map of the upper half plane have a continuous version of the Hadamard factorization \cite{PascoeTrace}. Moreover, such functions satisfy the determinantal isoperimetric inequality
		\beq \label{detiso}
			\det f(A)f(C) \leq \det f(B)f(D)
		\eeq
	whenever $A \leq B \leq C$ and $D = A+B -C$, where $A, B, C$ are self-adjoint matrices of the same arbitrary size with spectrum contained in some interval in $\mathbb{R}$ where $f$ does not vanish
	and $f$ is evaluated on matrices via the functional calculus. While we do not belabor to prove the point as it is irrelevant to our current aims,  readers of culture will see that functions in the $N$-th order Laguerre-P\'olya class preserve the inequality \eqref{detiso} on $N$ by $N$ matrices via the classical
	Dobsch-Donaghue theorem \cite{Donoghue} combined with \cite[Theorem 3.3]{PascoeTrace}.

	We examine entire functions, and particularly those in the $N$-th order Laguerre-P\'olya class,
	whose zeros $\{\lambda_i\}$ satisfy reasonable spacing conditions.  To that end, 
	define the \textbf{spacing constant} $c$ of a Hadamard product $f$ as in \eqref{eqn:entiref} as $c=0$ if $f$ has a repeated zero and 
	\begin{equation} \label{eqn:c} c=\inf_{i \ne j}  \left\{ \left| \Re(\lambda_i-\lambda_j) \right|:  \lambda_i \ne \bar{\lambda}_j \right\},\end{equation}
	if $f$ has only simple zeros. 
	If the spacing constant is nonzero, then we call the function \textbf{spaced}.
	Define the \textbf{height} of a Hadamard product to be
		\begin{equation} \label{eqn:b}  b= \inf_i \left \{| \Im (\lambda_i)| : \Im (\lambda_i) \neq 0\right \}.\end{equation}
	If the infimum is taken over an empty set, we say the height is infinite.
	We define the \textbf{aperture} of a Hadamard product to be $\kappa = b/c$ if $c \neq 0.$ Otherwise
	we define $\kappa = \infty.$ If our function is spaced, then $\kappa = \infty$ implies the zeros are real.
	
	\subsection{Main Results} Our main results are of the following flavor: we assume a function is in the $N$-th order Laguerre-P\'olya class and then conclude that $\kappa$ has to be big where ``how big'' goes to infinity with $N.$
	Note that if $\kappa$ is large, there can be no non-real zeros near the real axis.
	For example, we obtain the following for functions in the first order Laguerre-P\'olya class.
	\begin{theorem} \label{thm:1}
		Let $f$ be an entire function of genus one such that $f(0)\neq 0.$
		If $f$ is in the first order Laguerre-P\'olya class,
		then
			$$\kappa \geq \frac{\sqrt{3}}{\pi}.$$
	\end{theorem}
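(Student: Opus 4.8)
The plan is to extract concrete information from the single positivity condition $A_1(x) = a_1(x) \geq 0$, which says $\sum_i \frac{1}{(\lambda_i - x)^2} \geq 0$ for all real $x$ outside the zero set. Writing $\lambda_i = \alpha_i + i\beta_i$, the real part of $\frac{1}{(\lambda_i - x)^2}$ is $\frac{(\alpha_i - x)^2 - \beta_i^2}{((\alpha_i-x)^2 + \beta_i^2)^2}$, and since by Lemma \ref{lem:1O} the nonreal zeros come in conjugate pairs, the sum is automatically real; so the condition is genuinely $\sum_i \frac{(\alpha_i-x)^2 - \beta_i^2}{((\alpha_i-x)^2+\beta_i^2)^2} \geq 0$ for all admissible $x$. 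Each real zero contributes a nonnegative term $\frac{1}{(\lambda_i - x)^2}$, so those only help; the tension comes entirely from nonreal zeros, where the term is negative precisely when $x$ is within horizontal distance $\beta_i$ of $\alpha_i$.

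The strategy is then a localization/contradiction argument. Suppose $\kappa < \frac{\sqrt{3}}{\pi}$, so there is a nonreal zero $\lambda_0 = \alpha_0 + i\beta_0$ with $|\beta_0|/c < \frac{\sqrt 3}{\pi}$, i.e. the height $b \le |\beta_0| < \frac{\sqrt 3}{\pi} c$. I would choose the test point $x = \alpha_0$ (the real part of the offending zero), or perhaps integrate $a_1(x)$ against a suitable nonnegative weight supported near $\alpha_0$, to isolate the large negative contribution $-\frac{1}{\beta_0^2}$ (and its conjugate, giving $-\frac{2}{\beta_0^2}$) from that zero. The key estimate is to bound the total positive contribution of all the other zeros at this point from above. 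Because the function is spaced with spacing constant $c$, the real parts $\alpha_i$ of the other zeros are separated from $\alpha_0$ by at least $c$ in a suitable sense, so $\sum_{i \neq 0} \Re\frac{1}{(\lambda_i - \alpha_0)^2} \le \sum_{i\neq 0}\frac{1}{(\alpha_i - \alpha_0)^2}$, and this sum is dominated by something like $2\sum_{k=1}^\infty \frac{1}{(kc)^2} = \frac{2}{c^2}\cdot\frac{\pi^2}{6} = \frac{\pi^2}{3c^2}$. Positivity of $a_1(\alpha_0)$ (taking a limit from nearby admissible points since $\alpha_0$ itself may be a forbidden value only if a real zero sits there, which we can rule out or handle separately) would then force $\frac{2}{\beta_0^2} \le \frac{\pi^2}{3 c^2}$, i.e. $\frac{\beta_0^2}{c^2} \ge \frac{6}{\pi^2}$... which after matching constants gives $b/c \ge \frac{\sqrt 3}{\pi}$, the desired bound.

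The main obstacle is making the off-diagonal bound $\sum_{i\ne 0}\frac{1}{(\alpha_i-\alpha_0)^2} \le \frac{\pi^2}{3c^2}$ rigorous and sharp: the spacing constant only controls consecutive differences of distinct real parts, so I need to argue that the real parts $\{\alpha_i\}$, once sorted, are spaced by at least $c$ and hence embed into a copy of $c\mathbb{Z}$, making the worst case exactly the sum $\frac{2}{c^2}\zeta(2)$ — and I must be careful that conjugate pairs share a real part (so they don't individually need to be $c$-separated from each other, but their common real part must be $c$-separated from the others, which is exactly what \eqref{eqn:c} with the clause $\lambda_i \neq \bar\lambda_j$ encodes). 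A secondary subtlety is the evaluation at $x = \alpha_0$ when $\alpha_0$ is not in $\mathbb{R}\setminus\{\lambda_i\}$; this only happens if some real $\lambda_j = \alpha_0$, and then that term $\frac{1}{(\lambda_j - x)^2} \to +\infty$ actually helps, so a limiting argument $x \to \alpha_0$ still yields the contradiction (or one simply notes the real zero at $\alpha_0$ dominates and there is nothing to prove). I would also double-check whether a cleaner route is to integrate $\int a_1(x)\,\varphi(x)\,dx \ge 0$ against a bump $\varphi$, which automatically handles the finitely many excluded points and may produce the constant $\frac{\sqrt 3}{\pi}$ more transparently; I expect one of these two approaches to close the argument cleanly.
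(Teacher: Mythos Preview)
Your approach is essentially the paper's: evaluate $a_1$ at $x=\alpha_0$, the real part of a nonreal zero of (near-)minimal height, isolate the contribution $-2/\beta_0^2$ from the conjugate pair, and bound the remaining terms using the $c$-spacing of the real parts. One slip to fix: since conjugate pairs share a real part, each site of your $c\mathbb{Z}$ lattice can carry \emph{two} zeros, so the correct upper bound is $\tfrac{4}{c^2}\zeta(2)=\tfrac{2\pi^2}{3c^2}$, not $\tfrac{\pi^2}{3c^2}$; plugging this in yields exactly $\kappa\ge\sqrt{3}/\pi$ (your version would give the too-strong $\sqrt{6}/\pi$, which is why your constants did not ``match''). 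Your worry about $x=\alpha_0$ being excluded is unnecessary: the spacing hypothesis forbids any other zero, real or not, from having real part $\alpha_0$, so $\alpha_0\in\mathbb{R}\setminus\{\lambda_i\}$.
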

This appears in Section \ref{sec:spaced} as Theorem \ref{lem:kappa}. More generally, we obtain the following, which appears later as  Theorem \ref{thm:NLP}:
	\begin{theorem} \label{thm:2}
		Let $f$ be an entire function of genus one such that $f(0)\neq 0.$
		If $f$ is in the N-th order Laguerre-P\'olya class then
		$$N \leq \frac{\ln\left( \frac{4\pi^2}{3}+4 \right)}{\ln 2} + \frac{\pi^3 \sqrt{3}}	{\ln 2} \left(\kappa + \sqrt{1+\kappa^2} \right)^6\left(1+\kappa^2\right)^{3/2} \approx \kappa^9. $$
	\end{theorem}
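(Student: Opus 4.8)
The plan is to assume $f\in N$-LP and extract an upper bound on $N$ from the positivity of a \emph{single} truncated Hankel matrix $A_N(x)$, evaluated at the real part of a near‑minimal non‑real zero, tested against a vector tailored to that zero. First the reductions: if $f$ has no non‑real zeros then $\kappa=\infty$ and there is nothing to prove, so assume it has one; by Lemma \ref{lem:1O} the non‑real zeros occur in conjugate pairs, so fix such a pair $\lambda=\alpha+i\beta$, $\bar\lambda=\alpha-i\beta$ with $\beta>0$ within $\varepsilon$ of the height $b$. The spacing hypothesis forces every other zero $\mu$ to satisfy $|\Re\mu-\alpha|\ge c$, with the distinct real parts of the remaining zeros $c$‑separated; in particular $\sum_{\mu\neq\lambda,\bar\lambda}|\mu-\alpha|^{-2}\le\tfrac{2\pi^2}{3c^2}$. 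After rescaling we may take $c=1$, so $\beta$ is essentially $\kappa$.

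Next, the key positivity. Writing each entry of $A_N(\alpha)$ in moment form gives $A_N(\alpha)=\sum_{\mu}(\mu-\alpha)^{-2}u_\mu u_\mu^{\mathsf T}$ with $u_\mu=(1,(\mu-\alpha)^{-1},\dots,(\mu-\alpha)^{-(N-1)})^{\mathsf T}$, so $\langle A_N(\alpha)v,v\rangle=2\Re\!\big[(\mu-\alpha)^{-2}(v^{\mathsf T}u_\mu)^2\big]_{\mu=\lambda}+\sum_{\mu\neq\lambda,\bar\lambda}(\ldots)$ for real $v$. The trick is to use only those $v$ with $v_k=0$ whenever $k-1$ is odd: then $v^{\mathsf T}u_\lambda$ is real and equals $p(-\beta^{-2})$, where $p(t)=\sum_m v_{2m+1}t^{m}$ is a polynomial of degree $\le\lceil N/2\rceil-1=:D$, and $v^{\mathsf T}u_\mu=p\big((\mu-\alpha)^{-2}\big)$ for the other zeros. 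Hence the conjugate pair contributes exactly $-2\beta^{-2}p(-\beta^{-2})^2\le 0$, and for \emph{every} polynomial $p$ of degree $\le D$ the inequality $A_N(\alpha)\ge 0$ gives
\[ p(-\beta^{-2})^2\ \le\ \tfrac{\beta^2}{2}\sum_{\mu\neq\lambda,\bar\lambda}\frac{\big|p((\mu-\alpha)^{-2})\big|^2}{|\mu-\alpha|^2}\ \le\ \tfrac{\pi^2\beta^2}{3}\,\|p\|_{\Omega}^2, \]
where $\Omega$ is any compact set containing all the points $s_\mu:=(\mu-\alpha)^{-2}$.

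Then the extremal estimate. The map $z\mapsto(z-\alpha)^{-2}$ together with $|\Re\mu-\alpha|\ge c$ and $|\Im\mu|\ge b$ (for non‑real $\mu$) confines all $s_\mu$ to an explicit compact $\Omega$: the segment $[0,c^{-2}]$ carrying the images of the real zeros, plus a small $\kappa$‑dependent region near $0$ carrying the images of the remaining conjugate pairs. One checks that $-\beta^{-2}$ lies strictly to the left of $\Omega$ (the spacing bound $|\Re\mu-\alpha|\ge c$ prevents any $s_\mu$ from reaching the negative real axis near $-\beta^{-2}$), at a distance from $\Omega$ bounded below explicitly in $\kappa$. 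Taking $p$ to be the Chebyshev‑type extremal polynomial of degree $D$ for $\Omega$ normalized at $-\beta^{-2}$, a Bernstein–Walsh/Chebyshev lower bound gives $|p(-\beta^{-2})|/\|p\|_\Omega\ge\tfrac12\xi(\kappa)^{D}$ for an explicit $\xi(\kappa)>1$ determined by the conformal distance of $-\beta^{-2}$ to $\Omega$. Feeding this into the displayed inequality forces $\tfrac14\xi(\kappa)^{2D}\le\tfrac{4\pi^2\beta^2}{3}$, i.e. $D\le\big(\ln\tfrac{4\pi^2\beta^2}{3}+\ln 4\big)/(2\ln\xi(\kappa))$; since $N\le 2D+2$ and $\beta<b+\varepsilon$, letting $\varepsilon\to0$ and inserting the explicit lower bound for $\ln\xi(\kappa)$ produces a bound of the stated shape, the powers of $\kappa+\sqrt{1+\kappa^2}$ and $1+\kappa^2$ being exactly what the conformal‑distance computation yields.

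The hard part is this last step: quantitatively pinning down the region $\Omega$ occupied by $\{(\mu-\alpha)^{-2}\}$ and bounding below the conformal distance from $-\beta^{-2}$ to it. A crude enclosure (say the disk of radius $c^{-2}$) is worthless because $-\beta^{-2}$ lies inside it; one must use the $c$‑separation essentially to see that $-\beta^{-2}$ sits outside the convex hull of $\Omega$ by a definite $\kappa$‑dependent margin, and then convert that margin into the growth rate $\xi(\kappa)$ of the extremal polynomial without losing too much. A secondary technical point is that the contributions of the other conjugate pairs to $A_N(\alpha)$ are indefinite rather than positive semidefinite, but this is harmless: those terms are only ever estimated via $|2\Re w|\le 2|w|$, which is precisely what gave the bound $\|p\|_\Omega^2\sum|\mu-\alpha|^{-2}$ above.
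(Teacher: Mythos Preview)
Your approach differs substantially from the paper's. The paper does not invoke Chebyshev‐type extremal polynomials or Bernstein--Walsh; instead it makes the concrete choice $q=S_N$, the $N$-th Taylor polynomial of $\sin$, scaled by a parameter $t$. Step~1 of its proof shows directly that $\sum_i \Re\bigl(\sin(\tilde t/(\lambda_i-\alpha))^2\bigr)<-1$ for an explicit $\tilde t$ (this is where the factor $(\kappa+\sqrt{1+\kappa^2})^6(1+\kappa^2)^{3/2}$ arises, from balancing two exponential rates), and Step~2 passes from $\sin$ to $S_N$ via a Cauchy‐integral remainder bound, which produces the $\ln 2$ denominators and the additive constant. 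The specific constants in the theorem are artifacts of that particular argument, not canonical potential‐theoretic quantities.

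Your outline is a reasonable strategy toward \emph{some} bound $N\le F(\kappa)$, but as written it has two genuine gaps. First, the claim that the conformal‐distance computation yields ``exactly'' the powers $(\kappa+\sqrt{1+\kappa^2})^6(1+\kappa^2)^{3/2}$ is unjustified and almost certainly false: those exponents come from the sine computation, not from the Green's function of any region $\Omega$ you could naturally write down. Second, and more seriously, you have not actually pinned down $\Omega$ or $\xi(\kappa)$, and the geometry is more hostile than your sketch suggests. The images $s_\mu=(\mu-\alpha)^{-2}$ of the \emph{other} non‐real zeros lie in the disk of radius $(c^2+b^2)^{-1}$ about $0$, while your target $-\beta^{-2}\approx -b^{-2}$ sits just outside it; the ratio of moduli is only $1+1/\kappa^2$. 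Meanwhile the real zeros force smallness on $[0,c^{-2}]$, and near the left endpoint of that interval the Bernstein--Walsh level curves grow like $e^{O(D/\kappa)}$, the same order as at $-\beta^{-2}$ itself. So the extremal problem for the union (interval $\cup$ disk) is delicate, and you cannot simply read off $\xi(\kappa)>1$; this is precisely the ``hard part'' you flag, and without it the argument is incomplete. The paper sidesteps this geometry entirely: the boundedness of $\sin$ on $\mathbb{R}$ handles all real zeros uniformly, and its exponential growth along $i\mathbb{R}$ singles out the conjugate pair, with the other non‐real zeros controlled by a crude estimate on $|\Im(\mu-\alpha)^{-1}|$ rather than by any region in the $s$‐plane.
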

	We conjecture that the $9$-th order behavior is optimal.
	
	To prove our results, we use the following idea, which is similar to one arising in the $N$-matrix monotonicity literature \cite{Donoghue, heinevaara1, heinevaara2}.
	We test the Hankel matrix $A_N(x)$ on a vector 
		$$
		\bbm 
		q_{1} \\
		q_{2}\\
		\vdots \\
		q_N
		\ebm^*
		A_N(x)
		\bbm 
		q_{1} \\
		q_{2}\\
		\vdots\\
		q_N
		\ebm
		= \sum_i q(1/(\lambda_i-x))^2,$$
	where $q(z) = \sum^{N}_{j=1} q_jz^j,$ $q_j \in \mathbb{R}.$ 
	Then to a detect non-real zero $\lambda_k$, we try to simultaneously make both $q(1/(\lambda_k-x))^2$ big and negative 	and $q(1/(\lambda_i-x))^2$ small for $\lambda_i \in \mathbb{R}$. 
	If one wants a $q$ that is small on $\mathbb{R}$ and big off the real line, one might guess
	that the optimal things to try are Taylor polynomials for $\sin (rz).$
	Specifically, our crucial observation is a ``sine recovery lemma," which is perhaps a remarkable result on its own. (Indeed, we have now remarked about it in the introduction.)
	\begin{lemma}[Sine recovery lemma] 
		Let $f$ be an entire function of genus one such that $f(0)\neq 0.$
		If
		$\sum_i \sin(t/(\lambda_i-x))^2 \geq 0$ for all $t \in \mathbb{R}$ and $x\in \mathbb{R} \setminus \{\lambda_i\}$, then
		$\kappa = \infty.$ 
	\end{lemma}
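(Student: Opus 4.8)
The plan is to prove the contrapositive. Suppose $f$ is spaced, so $c>0$, and has a non-real zero, so the height $b$ is finite and $\kappa=b/c<\infty$; the goal is to produce $t\in\mathbb R$ and $x\in\mathbb R\setminus\{\lambda_i\}$ with $\sum_i\sin(t/(\lambda_i-x))^2<0$.

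The idea is to put the test point directly beneath an almost-lowest non-real zero. Choose a non-real zero $\lambda_k$ whose height $|\Im\lambda_k|$ equals $b$ (or, if that infimum is not attained, lies within a margin $\varepsilon>0$ of $b$, with $\varepsilon$ to be fixed below), put $x_0:=\Re\lambda_k$, and set $\tau:=1/|\Im\lambda_k|$. Then $\lambda_k-x_0$ is purely imaginary, so $1/(\lambda_k-x_0)$ is purely imaginary of modulus $\tau$ and $\sin(t/(\lambda_k-x_0))^2=-\sinh^2(t\tau)$; if $\bar\lambda_k$ is also a zero it contributes the same quantity, so together these zeros contribute $1-\cosh(2t\tau)$, which as $t\to+\infty$ tends to $-\infty$ like $-\tfrac12 e^{2t\tau}$. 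No perturbation of $x_0$ is needed, which is fortunate: $x_0=\Re\lambda_k$ cannot be a real zero $\lambda_m$ (else $|\Re(\lambda_m-\lambda_k)|=0<c$ contradicts spacing) and, being real, is not a non-real zero, so $x_0\in\mathbb R\setminus\{\lambda_i\}$; whereas moving $x_0$ slightly would replace the sign-definite factor in front of $e^{2t\tau}$ by an oscillating cosine $1-\cos(2t\,\Re(1/(\lambda_k-x)))$ and destroy the asymptotic. The whole task is now to show the remaining zeros contribute only $o(e^{2t\tau})$.

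A real zero $\lambda_i$ gives $\sin(t/(\lambda_i-x_0))^2\in[0,1]$; splitting according to whether $|\lambda_i-x_0|\le t$ or not, using $|\sin w|\le|w|$ on the far range, the elementary bound $\#\{i:|\lambda_i|\le R\}\le R^2\sum_i|\lambda_i|^{-2}$ (valid since $f$ has genus one) on the near range, and $\sum_i|\lambda_i|^{-2}<\infty$, the real zeros contribute only $O(t^2)$. For a non-real zero $\lambda_j\notin\{\lambda_k,\bar\lambda_k\}$, writing $v_j:=\Im(1/(\lambda_j-x_0))$ and using $\sin(a+ib)^2=\tfrac12(1-\cos 2a\,\cosh 2b+i\sin 2a\,\sinh 2b)$ gives $\Re[\sin(t/(\lambda_j-x_0))^2]\le\cosh^2(tv_j)$, so everything hinges on bounding $|v_j|$ strictly below $\tau$. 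This is exactly where the hypotheses bite: the spacing condition gives $|\Re(\lambda_j-\lambda_k)|\ge c$ (legitimately, as $\lambda_j\ne\bar\lambda_k$), and the definition of the height gives $|\Im\lambda_j|\ge b$, whence
$$|v_j|=\frac{|\Im\lambda_j|}{(\Re\lambda_j-x_0)^2+(\Im\lambda_j)^2}\le\sup_{h\ge b}\frac{h}{c^2+h^2}=\frac{m}{c^2+m^2}=:M,\qquad m:=\max(b,c),$$
and a one-line calculation shows $M<1/b$. Picking the margin $\varepsilon$ small enough that $\tau>M$ as well, we get $|v_j|\le M\le\tau-\delta_0$ for a fixed $\delta_0>0$, so each such term is at most $\cosh^2(t(\tau-\delta_0))$; the same near/far split then bounds the contribution of all non-real zeros other than $\lambda_k,\bar\lambda_k$ by $O(t^2e^{2t(\tau-\delta_0)})$. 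Summing everything, $\sum_i\sin(t/(\lambda_i-x_0))^2=1-\cosh(2t\tau)+O(t^2e^{2t(\tau-\delta_0)})+O(t^2)\sim-\tfrac12 e^{2t\tau}$, which is negative for $t$ large, contradicting the hypothesis and completing the proof. (If $\sum_i\sin(t/(\lambda_i-x_0))^2$ happens to be non-real for some $t$ — impossible when $f$ is real on $\mathbb R$, e.g. when $f\in 1$-LP — then the hypothesis already fails; otherwise the real-part bounds above literally compute the sum.)

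I expect the only genuine obstacle to be the resonance step: precluding another non-real zero of comparable height from sitting almost directly above or below $x_0$ and producing a positive exponential of the same order $e^{2t\tau}$. The spacing hypothesis is precisely what rules this out, and extracting the strict inequality $M<\tau$ — with a genuine gap, and with room to spare when the height is not attained — is the heart of the matter; the rest (absolute convergence of the series, the $O(t^2)$ zero count, and domination by the near-lowest pair) is routine bookkeeping.
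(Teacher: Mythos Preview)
Your proof is correct and follows the same overall strategy as the paper's: argue by contrapositive, place $x_0=\Re\lambda_k$ directly beneath a near-lowest non-real zero $\lambda_k$, and show that the negative contribution $-\sinh^2(t\tau)$ from $\lambda_k$ (and $\bar\lambda_k$, if present) eventually dominates the remaining terms. The paper's own proof of the lemma is in fact just a pointer back to Step~1 of the proof of Theorem~\ref{thm:NLP}, where exactly this is carried out.

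The bookkeeping differs. The paper bounds each remaining term by
\[
\Re\!\left[\sin^2\!\Big(\tfrac{t}{\lambda_i-\alpha}\Big)\right]\le \frac{t^2}{|\lambda_i-\alpha|^2}\,e^{2t/d},\qquad d=\sqrt{b^2+c^2},
\]
using the uniform estimate $|\Im(1/(\lambda_i-\alpha))|\le 1/d$, and then sums the prefactors via spacing to get $\tfrac{2\pi^2t^2}{3c^2}e^{2t/d}$; comparing $2/d$ with the exponent $4/(b+d)$ coming from the target pair yields an \emph{explicit} value $\tilde t$ at which the sum drops below $-1$. You instead run a near/far split with a zero-count, obtaining $O(t^2 e^{2tM})$ with the sharper constant $M=m/(c^2+m^2)$, $m=\max(b,c)$, and then let $t\to\infty$. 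Both routes are valid; the paper's more explicit version exists because it is recycled from the quantitative Theorem~\ref{thm:NLP}, where a concrete $\tilde t$ feeds into the bound on $N$. For the lemma in isolation your asymptotic argument is entirely sufficient, and your observation about the ``resonance'' obstruction---that the spacing hypothesis is exactly what forces the strict gap $M<\tau$---is precisely the crux in both proofs.
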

	This appears at the end of Section \ref{sec:spaced} as Lemma \ref{lem:sine}.

We also briefly consider functions that satisfy a stronger spacing condition. We call these functions \textbf{strongly spaced}, which means that there are constants $\gamma, C>0$ such that if the zeros $\lambda_j$ are ordered with increasing, positive real part, then
\[\Re(\lambda_{j+1}-\lambda_j) \geq C| \Re(\lambda_j)|^{\gamma}\]
for all $j$ where $\lambda_{j+1} \ne \bar{\lambda}_j$. Under this assumption, we prove:
\begin{theorem} \label{thm:3} Let $f$ be  in the first order Laguerre-P\'olya class with $f(0) \ne 0$ and strongly spaced constants $C, \gamma>0$. If $\lambda_j$ is a zero of $f$ with $\Im(\lambda_j) \ne 0$, $\lambda_{j+1} = \bar{\lambda}_j$, and $j >1$, then 
\[ \left | \Im(\lambda_j) \right| \ge C\tfrac{\sqrt{3}}{2\pi} \left|\Re(\lambda_{\lfloor j/2 \rfloor} )\right|^{\gamma},\]
where $\lfloor j/2 \rfloor$ denotes the greatest integer less than or equal to $j/2$. 
\end{theorem}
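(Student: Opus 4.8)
The plan is to squeeze the conclusion out of a single scalar instance of the first-order Laguerre--P\'olya condition, namely the positivity of $a_1$ at the real point $x=\Re(\lambda_j)$. There the conjugate pair $\{\lambda_j,\lambda_{j+1}\}$ makes a large negative contribution of size $2/|\Im(\lambda_j)|^2$, which must be absorbed by the remaining zeros; the strong spacing condition caps how much they can absorb, and this forces $|\Im(\lambda_j)|$ to be large.

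First I would dispose of preliminaries. Since $f(0)\neq 0$ we have $\ell=0$ in \eqref{eqn:entiref}; since $f$ is in $1$-LP, Lemma \ref{lem:1O} shows that the non-real zeros occur in conjugate pairs, so $g=-f'/f$ is real on $\mathbb{R}$ and $a_1(x)=\sum_i(\lambda_i-x)^{-2}$ is a real convergent sum for $x\in\mathbb{R}\setminus\{\lambda_i\}$, and the $1$-LP condition \eqref{eqn:Nmatrix} at such $x$ is precisely $a_1(x)\ge 0$. Write the pair as $\lambda_j=\alpha+i\beta$, $\lambda_{j+1}=\alpha-i\beta$ with $\alpha=\Re(\lambda_j)>0$ and $\beta=|\Im(\lambda_j)|>0$, and set $\delta:=C\,|\Re(\lambda_{\lfloor j/2\rfloor})|^{\gamma}$, so the goal is $\beta\ge\frac{\sqrt3}{2\pi}\,\delta$.

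Next I would evaluate $a_1(\alpha)\ge 0$. The pair contributes $(i\beta)^{-2}+(-i\beta)^{-2}=-2/\beta^2$, so, using $\Re\bigl((w+iv)^{-2}\bigr)=\frac{w^2-v^2}{(w^2+v^2)^2}\le w^{-2}$ for real $w\ne 0$ and $v$,
\[
\frac{2}{\beta^{2}}\ \le\ \sum_{\lambda_i\notin\{\lambda_j,\lambda_{j+1}\}}\Re\frac{1}{(\lambda_i-\alpha)^{2}}\ \le\ \sum_{\lambda_i\notin\{\lambda_j,\lambda_{j+1}\}}\frac{1}{\bigl(\Re(\lambda_i)-\alpha\bigr)^{2}},
\]
where a non-real zero and its conjugate contribute two equal real-part terms on the right. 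The task reduces to bounding the right-hand sum by a universal multiple of $\delta^{-2}$, which I would do by splitting the remaining zeros by index. For the near zeros, indices $k\ge\lfloor j/2\rfloor$ with $k\notin\{j,j+1\}$: strong spacing applied between consecutive indices (assuming for the moment that no other conjugate pairs occur among these indices; see below) and the monotonicity $C\,\Re(\lambda_m)^{\gamma}\ge\delta$ for $m\ge\lfloor j/2\rfloor$ give $|\Re(\lambda_k)-\alpha|\ge|k-j|\,\delta$ for $k<j$ and $\ge(k-j-1)\,\delta$ for $k>j+1$, so the near-zero contribution is at most $2\,\delta^{-2}\sum_{m\ge1}m^{-2}=\pi^2/(3\delta^2)$. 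For the far zeros, indices $k<\lfloor j/2\rfloor$: there are at most $\lfloor j/2\rfloor-1<j/2$ of them, and each satisfies $\alpha-\Re(\lambda_k)\ge\alpha-\Re(\lambda_{\lfloor j/2\rfloor})\ge\lceil j/2\rceil\,\delta\ge\tfrac{j}{2}\,\delta$, so their total contribution is at most $(j/2)\cdot(2/(j\delta))^2=2/(j\delta^2)\le\delta^{-2}$ since $j\ge 2$. Combining, $2/\beta^2\le(\pi^2/3+1)\,\delta^{-2}$, i.e.\ $\beta\ge\delta\sqrt{6/(\pi^2+3)}$, which is comfortably stronger than the claimed $\beta\ge\frac{\sqrt3}{2\pi}\,\delta$.

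The one place genuine care is needed --- and hence the main obstacle --- is the presence of intervening conjugate pairs among the near zeros. A conjugate pair at real part $a\in[\Re(\lambda_{\lfloor j/2\rfloor}),\alpha)$ corresponds to two equal, consecutive values of $\Re(\lambda_{\bullet})$ and so leaves a zero ``gap'' in the telescoping estimates above, which must be compensated; the clean way is to observe that such a pair either contributes a nonpositive amount to $\Re\bigl((\,\cdot\,-\alpha)^{-2}\bigr)$ (when its imaginary part exceeds $|a-\alpha|$) or else has its real part separated from $\alpha$ by a genuine $\ge\delta$ gap on at least one side, and in either case one may re-run the counting with a bounded extra loss --- affordable given the wide margin between $\sqrt{6/(\pi^2+3)}\approx0.68$ and $\tfrac{\sqrt3}{2\pi}\approx0.28$. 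A secondary bookkeeping point is to read ``ordered with increasing, positive real part'' as asserting that every zero in play has positive real part, legitimizing both $\Re(\lambda_m)\ge 0$ and the count $\#\{k<\lfloor j/2\rfloor\}<j/2$; were non-positive-real-part zeros present, they would sit at distance $\ge\alpha\ge\tfrac{j}{2}\delta$ from $\alpha$ and be square-summable, so they fold harmlessly into the far-zero term.
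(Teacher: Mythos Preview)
Your approach is the paper's: evaluate the first-order condition $a_1(\alpha)\ge 0$ at $\alpha=\Re(\lambda_j)$, isolate the negative contribution $-2/\beta^2$ from the pair $\{\lambda_j,\lambda_{j+1}\}$, bound the remaining sum $\sum_{i\ne j,j+1}(\alpha_i-\alpha)^{-2}$ by splitting at index $\lfloor j/2\rfloor$ and telescoping the strong-spacing gaps. The paper's $S_1,S_2,S_3$ are exactly your far/near decomposition.

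The incomplete step is your handling of intervening conjugate pairs among the near indices, which you yourself flag as the ``main obstacle.'' Your proposed dichotomy (drop the pair when its imaginary part exceeds $|a-\alpha|$; otherwise note that it is flanked by genuine $\ge\delta$ gaps) does not close the argument. In the second branch you still have two indices $k,k+1$ with $\alpha_k=\alpha_{k+1}$ sitting inside the telescope, and knowing that this pair is flanked by genuine gaps does nothing to repair the estimate $|\alpha_k-\alpha|\ge|k-j|\,\delta$, which can fail by an arbitrarily large factor if many such pairs occur between $k$ and $j$. The phrase ``re-run the counting with a bounded extra loss'' is carrying all the weight and is not justified; the margin you cite ($0.68$ versus $0.28$) is irrelevant if the loss is unbounded.

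The paper's fix is much simpler and requires no dichotomy at all: since the zeros are simple, a conjugate pair at indices $m,m+1$ cannot be followed immediately by another (because $\bar\lambda_{m+1}=\lambda_m\neq\lambda_{m+2}$), so in any telescope $\alpha_j-\alpha_i=\sum_{m=i}^{j-1}(\alpha_{m+1}-\alpha_m)$ at most half of the summands can vanish, and the rest are each $\ge C\,\alpha_i^{\gamma}$. This gives $|\alpha_i-\alpha_j|\ge\tfrac12\,|i-j|\,C\,\alpha_{\min(i,j)}^{\gamma}$ directly, at the cost of a single factor of $2$ in distances (hence $4$ in the sums). With that correction your near and far estimates become exactly the paper's $S_2+S_3\le \tfrac{4\pi^2}{3}\delta^{-2}$ and $S_1\le \tfrac{8}{j}\delta^{-2}$, and the conclusion $\beta\ge\delta/\sqrt{4/j+2\pi^2/3}\ge\delta\,\tfrac{\sqrt{3}}{2\pi}$ follows.
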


This appears in Section \ref{StrongSpace} as Theorem \ref{thm:ss} (which also mentions the $j=1$ case). As the strongly spaced setting was outside of the bounds of our original investigation, we only analyze the behavior of strongly spaced functions in the  first order Laguerre-P\'olya class. However, we encourage the ambitious reader to explore  strongly spaced functions in the more general $N$-th order Laguerre-P\'olya class. 

For the remainder, we connect our results to the Riemann hypothesis and then prove our main results. 
 In Section \ref{sec:rh}, we use our Laguerre-P\'olya relaxations to obtain a family of relaxations of the Riemann hypothesis and assuming certain spacing conditions, use our main results to conjecture certain zero-free regions of the zeta function. 
Regarding spacing, we propose a related spacing constant in Section \ref{ss:rh} and discuss two alternate nonconstant spacing regimes and their applications in Section \ref{subsec:ssc}. In our final three sections, we establish preliminary facts about Hankel matrices associated to Hadamard products and then prove our main (previously-stated) results.

\section*{Acknowledgements}  
We thank David Farmer for significant concrete discussions and useful insights on preliminary drafts, including the proof of Remark \ref{rem:improve} on Theorem \ref{thm:1}.

\section{Connections to the Riemann hypothesis} \label{sec:rh}

In this section, we further motivate our Laguerre-P\'olya relaxations by considering their implications for the Riemann hypothesis. First, let the sequence $\{\rho_k\}$ denote the zeros of the Riemann zeta function with positive imaginary part, listed according to multiplicity. To connect our setting to the Riemann hypothesis, we need to change variables slightly. To that end, for each $k$,
define  $\eta_k \in \mathbb{C}$ by $\eta_k = \Im(\rho_k) +i ( \tfrac{1}{2} -\Re(\rho_k))$. Then
\begin{equation} \label{eqn:rhok} \rho_k = \Re(\rho_k) + i \Im(\rho_k) =\tfrac{1}{2} + i\left( \Im(\rho_k) +i ( \tfrac{1}{2} -\Re(\rho_k))\right)= \tfrac{1}{2} + i \eta_k.\end{equation}
Set $\lambda_k = \eta_k^2$ and let $\Lambda$ be the complex analytic function
$$\Lambda(z) = \prod_{k} \left(1-\frac{z}{\lambda_k}\right) = e^{-(\sum_k 1/\lambda_k)z} \prod_{k} \left(1-\frac{z}{\lambda_k}\right) e^{z/\lambda_k}.$$
The function $\Lambda$ satisfies $\Lambda(z) = \Xi(\sqrt{z})$, where $\Xi$ is a standard function defined from the Riemann zeta function by factoring out its trivial zeros and pole. The functional equation of the zeta
function is equivalent to the assertion that $\Xi$ is real on the real
line. 
It follows that the Riemann hypothesis holds if and only if each $\eta_k$ (or equivalently, each $\lambda_k$) is on the real line.

\subsection{Relaxation hierarchy for the Riemann hypothesis} \label{ss:rh}

The key observation motivating these investigations is the following: $\Lambda$ is in the $N$-th order Laguerre-P\'olya class for all $N$ if and only if the Riemann hypothesis holds, see \cite{PascoeTrace}. Based on that, we propose the following set of relaxations of the Riemann hypothesis for each $N\in \mathbb{N}.$
\begin{conjecture}[$N$-th Hankel relaxation Riemann hypothesis, $N$-HRRH]
	The function $\Lambda$ is  in the $N$-th order Laguerre-P\'olya class.
\end{conjecture}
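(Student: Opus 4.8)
The plan is to establish, for the specific function $\Lambda$, the single matrix inequality $A_N(x) \ge 0$ of \eqref{eqn:Nmatrix}, uniformly for all real $x \notin \{\lambda_k\}$. The natural starting point is the testing identity recalled in the introduction: $A_N(x) \ge 0$ is equivalent to $\sum_k q(1/(\lambda_k - x))^2 \ge 0$ for every real polynomial $q$ of degree at most $N$ with $q(0) = 0$, where here $\lambda_k = \eta_k^2$ runs over the squared, reflected nontrivial zeros of $\zeta$. Since $\Lambda$ is real on the real line (this is the functional equation), the nonreal $\lambda_k$ occur in conjugate pairs and the only obstruction to positivity is the presence of such pairs: if the Riemann hypothesis holds then every $\lambda_k$ is a nonnegative real and the inequality is trivial, and conversely, by \cite{PascoeTrace}, the full family of inequalities over all $N$ is equivalent to the Riemann hypothesis. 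So a realistic theorem targets a fixed $N$ (or a slowly growing range of $N$), with the conjecture as stated sitting at the end of the hierarchy.

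Concretely, I would first make the change of variables $w = \sqrt{z}$ explicit. Because $\Lambda(z) = \Xi(\sqrt{z})$, the coefficients $a_n(x)$ of the logarithmic-derivative expansion of $\Lambda$ at a real point $x$ can be rewritten, by repeated use of the chain rule, in terms of $\Xi$ and its derivatives at $w = \sqrt{x}$. After this translation, $A_N(x) \ge 0$ becomes a Hankel-positivity statement -- equivalently, a hyperbolicity statement for a polynomial of degree roughly $2N$ assembled from successive logarithmic derivatives of $\Xi$ -- and for small $N$, and $N=1$ in particular, these are exactly inequalities of Laguerre/Tur\'an type for the Riemann $\Xi$ function, several instances of which are known unconditionally (work of Csordas--Norfolk--Varga, Csordas--Varga, and, for bounded-degree Jensen polynomials, Griffin--Ono--Rolen--Zagier). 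The plan for a given $N$ is then: (i) carry out the chain-rule bookkeeping to express $A_N(x)$ through the Jensen-type polynomials of $\Xi$ at $w=\sqrt{x}$; (ii) invoke or adapt the known positivity results for those polynomials; and (iii) control the behavior of $A_N(x)$ as $x$ ranges over all of $\R$ -- in particular as $x$ approaches a real zero $\lambda_k$, where a single dominant term $\sim (\lambda_k - x)^{-2N}$ forces the right sign, and on the negative real axis, which corresponds to $\Xi$ on the imaginary axis.

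I expect the main obstacle to be step (iii) together with the mismatch between what is proved and what is needed. The deepest available positivity results -- hyperbolicity of the degree-$d$ Jensen polynomials of $\Xi$ -- hold only for all but finitely many indices, and only for suitable truncations of $\Xi$; but $N$-HRRH demands the corresponding positivity at \emph{every} real $x$, with no exceptions, which is precisely the ``low-lying'' regime where a hypothetical off-line zero would sit. Closing that gap already for a single $N \ge 2$ appears to require genuinely new information about the location of the zeros of $\zeta$, and closing it for all $N$ at once is, by the equivalence above, the Riemann hypothesis itself. The honest expectation for this line of attack is therefore: the small-$N$ cases (certainly $N=1$) should be provable unconditionally from existing Laguerre/Tur\'an inequalities, $N$-HRRH for all $N$ would follow from sufficiently strong partial progress toward RH, and the conjecture in full remains open.
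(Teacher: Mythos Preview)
The statement you are analyzing is a \emph{conjecture} in the paper, not a theorem: the authors do not prove $N$-HRRH for any $N$, and your own proposal correctly arrives at the conclusion that the full statement is equivalent to the Riemann hypothesis and remains open. So there is nothing to compare against a ``paper's own proof,'' and the honest endpoint of your write-up is the right one.

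That said, your assessment of the small-$N$ situation is too optimistic. You assert that ``the small-$N$ cases (certainly $N=1$) should be provable unconditionally from existing Laguerre/Tur\'an inequalities,'' but the paper explicitly records that $1$-HRRH --- the inequality $\Lambda'(x)^2 - \Lambda''(x)\Lambda(x) \ge 0$ for all real $x$ --- was conjectured by Csordas and is still open. The known Tur\'an-type results (Csordas--Norfolk--Varga, Dimitrov--Lucas, Griffin--Ono--Rolen--Zagier) concern Taylor or Jensen coefficients of $\Xi$ at the origin, or asymptotically for large shifts, not the log-derivative Hankel matrix $A_N(x)$ uniformly over every real $x$; your step (i) chain-rule translation does not land on the standard Jensen polynomials, and your step (iii) ``control on all of $\mathbb{R}$'' is precisely the missing ingredient, already for $N=1$. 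So the genuine gap is not only at large $N$: no value of $N$ is currently within reach by the route you sketch.
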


 This condition is equivalent to saying that the matrix $A_N(x)$ in \eqref{eqn:Nmatrix} is positive semi-definite for $x \in \mathbb{R} \setminus \{\lambda_i\}$. Here, the $jk^{th}$ entry of $A_N(x)$ is
\[ a_{j+k-1}(x)= \sum_i \frac{1}{(\lambda_i-x)^{j+k}},\]
see Section \ref{sec:prelim}. It follows immediately that  each $A_N(x)$ is self-adjoint, 
since properties of the Riemann zeta function imply that the zeros of $\Lambda$ are real or  occur in complex conjugate pairs.  Then, checking the positivity of $A_N(x)$ is equivalent to showing that the principle minors of $A_N(x)$ (the determinants of the square submatrices obtained by removing a (possibly empty) set of rows and their corresponding columns from $A_N(x)$) are positive or zero. Because the entries of $A_N(x)$ come from the log derivatives of $\Lambda$ to order $2N$, these determinantal conditions are equivalent to a set of inequalities involving the derivatives of $\Lambda$ to order $2N.$

For example, if $N=1$, this conjecture says that for $x \in \mathbb{R} \setminus \{\lambda_i\}$, 
\[ 0 \le a_1(x) := -\frac{d^2}{dz^2}\left(  \log \Lambda(z) \right) 
\Big|_{z=x} = \frac{ \Lambda'(x)^2 - \Lambda''(x) \Lambda(x)}{\Lambda(x)^2}.\]

By continuity,  the numerator is nonnegative for every $x \in \mathbb{R}$, which implies the following simple form for $1$-HRRH:

\begin{conjecture}[$1$-HRRH]
$\Lambda'(x)^2-\Lambda''(x)\Lambda(x)\geq 0$ for real $x.$
\end{conjecture}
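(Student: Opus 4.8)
The plan is to translate the claimed inequality into a statement about the location of the zeros $\{\lambda_k\}$ of $\Lambda$ and then establish it in two regimes. Since $\Lambda$ is entire, $x\mapsto \Lambda'(x)^2-\Lambda''(x)\Lambda(x)$ is continuous on $\R$, and since the real zeros of $\Lambda$ are isolated, it suffices to prove nonnegativity on the dense set $\R\setminus\{\lambda_k\}$; there $\Lambda(x)$ is a nonzero real number (the zeros being real or in conjugate pairs), so dividing by $\Lambda(x)^2>0$ the inequality becomes
\[ -\frac{d^2}{dx^2}\log\Lambda(x)\;=\;\sum_k\frac{1}{(x-\lambda_k)^2}\;\ge\;0 .\]
Grouping the series by the conjugation symmetry, each real $\lambda_k$ contributes a positive real $(x-\lambda_k)^{-2}$, while each conjugate pair $\{\lambda_k,\bar\lambda_k\}$ contributes
\[ 2\,\Re\frac{1}{(x-\lambda_k)^2}\;=\;\frac{2\big((x-\Re\lambda_k)^2-(\Im\lambda_k)^2\big)}{\big((x-\Re\lambda_k)^2+(\Im\lambda_k)^2\big)^2},\]
which is negative precisely when $|x-\Re\lambda_k|<|\Im\lambda_k|$. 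So the task is to show that the ``windows of negativity'' produced by any off-line zeros are always outweighed by the positive contributions of the remaining (conjecturally all) on-line zeros.

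The first step handles the easy regime of $x$ small. Writing $\rho_k=\beta_k+i\gamma_k$ for the zeta zeros with positive imaginary part, one has $\lambda_k=\eta_k^2$ with $\Re\lambda_k=\gamma_k^2-(\tfrac12-\beta_k)^2$ and $\Im\lambda_k=2\gamma_k(\tfrac12-\beta_k)$. Since every zero lies strictly inside the critical strip, $|\tfrac12-\beta_k|<\tfrac12$, whence $\Re\lambda_k-|\Im\lambda_k|>\gamma_k^2-\gamma_k-\tfrac14$, which is increasing for $\gamma_k\ge\gamma_1=14.13\ldots$; hence for every real $x\le \gamma_1^2-\gamma_1-\tfrac14$ we have $|x-\Re\lambda_k|\ge|\Im\lambda_k|$ for all $k$. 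On that half-line every grouped term of the series is nonnegative, so the inequality holds unconditionally — consistently, $\Lambda$ is positive on $(-\infty,0]$, where it coincides with the completed zeta function restricted to the imaginary axis. This reduces the conjecture to $x$ bounded away from $0$, i.e.\ $x$ sitting among the bulk $\{\gamma_k^2\}$ of the (near-)real $\lambda_k$.

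For large positive $x$ the intended strategy is quantitative: combine a zero-free region with zero-density estimates of the shape $N(\sigma,T)\ll T^{\theta(\sigma)}(\log T)^{A}$, $\theta(\sigma)<1$ for $\sigma$ bounded away from $\tfrac12$, to show that the ``moderately off-line'' pairs (those with $|\Im\lambda_k|\asymp\gamma_k$, each contributing only $\asymp-\gamma_k^{-2}$) are too sparse to matter, and bound the positive part from below using the Riemann--von Mangoldt spacing of on-line zeros near $x$ together with the convergent tail from distant zeros. The main obstacle — and the reason this relaxation of RH remains open — is the one configuration this argument cannot rule out: a single pair $\{\lambda_k,\bar\lambda_k\}$ with $\beta_k$ very close to $\tfrac12$ has $|\Im\lambda_k|=2\gamma_k|\tfrac12-\beta_k|$ small and contributes, at $x=\Re\lambda_k$, a term as negative as $-2/(\Im\lambda_k)^2$, of unbounded size as $\beta_k\to\tfrac12$; density estimates give no saving for such near-critical zeros, and nothing unconditional forces an on-line zero to lie within distance $O(\Im\lambda_k)$ of $\Re\lambda_k$ to compensate. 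Thus a full unconditional proof appears to require new input excluding near-critical off-line zeros; under the Riemann hypothesis the statement is immediate (every $\lambda_k>0$, every term positive), and partial results should follow from any quantitative hypothesis pushing the $\beta_k$ toward $\tfrac12$.
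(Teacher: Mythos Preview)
The statement you are attempting to prove is labeled a \emph{conjecture} in the paper, not a theorem; the paper provides no proof and explicitly attributes the open problem (as $1$-HRRH) to Csordas. So there is no ``paper's own proof'' to compare against.

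More to the point, your proposal is not a proof either, and you say so yourself. The first two paragraphs correctly reduce the inequality to
\[
\sum_k \frac{1}{(x-\lambda_k)^2}\ge 0
\]
and dispose of the easy half-line where all grouped terms are nonnegative. But in the final paragraph you identify precisely the obstruction that keeps this conjecture open: a conjugate pair with $\beta_k$ close to $\tfrac12$ produces an arbitrarily large negative spike at $x=\Re\lambda_k$, and nothing in your toolkit (zero-free regions, zero-density bounds, Riemann--von Mangoldt) forces a compensating on-line zero nearby. You then conclude that ``a full unconditional proof appears to require new input.'' That is an honest assessment, but it means what you have submitted is a discussion of why the problem is hard, not a proof. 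The gap is exactly the one you name: there is no mechanism in the argument to control near-critical off-line zeros, and without one the strategy cannot close.
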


If $N=2$, our conjecture says that  $a_1(x)\ge0,$  $a_3(x)\ge 0,$ and $a_1(x)a_3(x) - a_2(x)^2 \ge 0$. One can rewrite these inequalities in terms of derivatives of $\Lambda$, but the result is not particularly illuminating. 
Instead, for $N \ge 2$, we suggest that the reader think about  our order-$N$ relaxations in terms of the matrix conditions or the polynomial criterion given in Lemma \ref{lem:poly}.

We also note that $1$-HRRH was originally conjectured by Csordas in \cite{CsordasConjecture}. Moreover, Csordas made some some higher order Tur\'an inequalities conjectures which together would prove the Riemann hypothesis,
although it not clear to us if they give rise to the exact same family of relaxations as ours, see \cite{Csordas, Csordas2, Dimitrov}.   Our relaxations may also be  connected to the following collection of conjectures studied by Farmer in \cite{Farmer2}: for $\Lambda$ and each of its derivatives, all local maxima are positive and all local minima are negative.  Again, these conjectures encompass $1$-HRRH, but it is not clear to us how the higher-order conjectures interact with our order-$N$ relaxations.

We can apply our main results (Theorems \ref{thm:1}, \ref{thm:2}, \ref{thm:3}) to $\Lambda$ if $\Lambda$ is in $N$-LP (i.e.~if $N$-HRRH holds) and if $\Lambda$ is {spaced} (or {strongly spaced}).  Under those assumptions, our main results imply that for each $N$, $\Lambda$ possesses a specific zero-free region near the real line, and as $N$ goes to infinity, $\Lambda$ has no zeros off of the real line. Of course, the specific zero-free regions will depend on the spacing constant for $\Lambda$, as defined in \eqref{eqn:c}.  Unfortunately, explicit bounds on the spacing of the zeta function's zeros are generally not known and it is an open question as to whether the zeros are even all simple, see \cite{Trudgian} and the references therein for recent results. Still, using the numerical evidence in the Odlyzko tables \cite{OdlyzkoList}, we are motivated to make the following spacing conjecture.

\begin{conjecture}[Spacing conjecture] \label{con:ss}
	The spacing constant from \eqref{eqn:c} for $\Lambda$ is positive and equal to $\lambda_6-\lambda_5$, which is greater than $159$.
\end{conjecture}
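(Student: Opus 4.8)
The plan is to split Conjecture~\ref{con:ss} into a finite certified computation at low height together with an analytic tail estimate, carried out throughout under the Riemann hypothesis. (If RH fails the $\lambda_k$ need not be real, and the spacing constant becomes a genuine statement about the planar geometry of \emph{all} the zeros of $\Lambda$; since the conjecture is only of interest in the RH world --- it is meant to be combined with the conjectures $N$-HRRH, whose conjunction over all $N$ is RH --- this restriction costs nothing.) On RH each $\eta_k$ equals the $k$-th positive ordinate $\gamma_k$ of $\zeta$, so $\lambda_k=\gamma_k^2$ with $0<\gamma_1<\gamma_2<\cdots$; note that the \emph{strict} inequalities here already encode simplicity of the zeros of $\zeta$, which is open, so the first assertion of the conjecture (positivity of $c$) is at least as hard as proving $\zeta$ has only simple zeros. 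Granting simplicity, $(\lambda_k)$ is a strictly increasing sequence tending to $\infty$, so the infimum over all pairs in \eqref{eqn:c} collapses to the infimum over consecutive indices, and the conjecture reduces to
\[ \gamma_{k+1}^2-\gamma_k^2 \;=\; (\gamma_{k+1}-\gamma_k)(\gamma_{k+1}+\gamma_k)\;\ge\;\lambda_6-\lambda_5 \quad\text{for every }k,\qquad \lambda_6-\lambda_5>159 .\]
The last inequality is routine: certified approximations to $\gamma_5$ and $\gamma_6$ to a few digits --- available from rigorous zero-isolation algorithms --- determine $\lambda_6-\lambda_5$ to high precision.

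For the low-height range I would invoke the large-scale rigorous verifications of RH, which establish that all zeros up to a height $T_0\approx 3\times 10^{12}$ are simple, lie on the critical line, and have been isolated to ample precision. One then evaluates $\gamma_{k+1}^2-\gamma_k^2$ for every $k$ with $\gamma_{k+1}\le T_0$ and checks that it is $\ge\lambda_6-\lambda_5$; this is a large but finite computation, and in fact only the finitely many anomalously small gaps need careful treatment, since near height $T$ a typical consecutive gap in $\gamma$ is of order $2\pi/\log T$, making a typical gap in $\lambda=\gamma^2$ of order $4\pi T/\log T$, vastly larger than $159$ once $T$ is even moderately large. Together with the previous paragraph this settles all $k$ with $\gamma_k\le T_0$.

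The hard part is the tail. For $\gamma_k>T_0$ one has $\gamma_{k+1}^2-\gamma_k^2\ge 2\gamma_k(\gamma_{k+1}-\gamma_k)$, so it would suffice to prove a lower bound on consecutive spacings of the shape $\gamma_{k+1}-\gamma_k>159/(2\gamma_k)$ for all $\gamma_k>T_0$ --- concretely, that no two consecutive zeros above height $3\times 10^{12}$ ever come within about $2.6\times 10^{-11}$ of one another (and proportionally closer for larger $\gamma_k$). This separation statement is the genuine obstruction. Heuristically it is far weaker than what GUE/random-matrix predictions give, since the smallest gap near height $T$ is expected to be of polynomial size, roughly $T^{-1/3+o(1)}$, hence enormously larger than the $T^{-1}$-scale threshold above; but no unconditional or even RH-conditional lower bound on $\gamma_{k+1}-\gamma_k$ of this quality is currently known, and, as already noted, even the qualitative statement $\gamma_{k+1}>\gamma_k$ for all $k$ is open. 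So the realistic outcome of this plan is a reduction: modulo a large but finite certified zero computation, Conjecture~\ref{con:ss} is equivalent to the statement that consecutive zeros of $\zeta$ above the verification height are never anomalously close --- a statement that is surely true on random-matrix grounds but well beyond current technology, which is exactly why it is posed as a conjecture rather than a theorem.
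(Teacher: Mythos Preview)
The statement is a \emph{conjecture}, and the paper makes no attempt to prove it. The paper's supporting discussion consists of (i) the quadratic identity relating gaps of the $\lambda_j$ to gaps of the $\rho_j$, (ii) the remark that heuristically the conjecture amounts to asking that consecutive $\zeta$-zeros near height $x$ be separated by at least order $1/x$, and (iii) a numerical check against Odlyzko's tables through the first two million zeros. Your proposal is not a proof either, and you explicitly say so in the final paragraph; what you give is a careful reduction (under RH) to exactly the separation statement $\gamma_{k+1}-\gamma_k \gtrsim 1/\gamma_k$ that the paper itself identifies as the heuristic content of the conjecture, together with the correct observation that no such lower bound---indeed not even simplicity---is known. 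So there is no discrepancy to report: both the paper and your analysis agree that the statement is open, and your reduction and the paper's heuristic discussion are the same argument in different levels of detail.

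One minor point worth flagging: you say the conjecture ``is only of interest in the RH world'' and therefore restrict to RH throughout. The paper does not take that view---its corollaries explicitly pair the spacing conjecture with $N$-HRRH for finite $N$, which does \emph{not} imply RH, and the definition \eqref{eqn:c} is phrased in terms of real parts precisely to remain meaningful off RH. Your RH-conditional analysis is the natural first step, but the full conjecture is a statement about $\Re(\lambda_{j+1}-\lambda_j)$ regardless of whether the $\lambda_j$ are real.
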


This conjecture might seem surprising, so it is worth noting that the spacing (of the real parts) of the zeros of $\Lambda$ is connected to the spacing of the zeros of the Riemann zeta function via the following equation:
\[ \Re(\lambda_{j+1}-  \lambda_{j}) = \left( \Im(\rho_{j+1})^2 -  \Im(\rho_{j})^2\right) + \left( (\tfrac{1}{2} - \Re(\rho_{j}))^2 - (\tfrac{1}{2} - \Re(\rho_{j+1}))^2\right).\]
Because of this quadratic relationship, one expects the zeros of $\Lambda$ to spread out as $j$ increases and indeed, looking at the first  two million zeros, it appears that the smallest gap happens to occur between the fifth and sixth zeros of $\Lambda.$

Heuristically, this spacing conjecture says that the gap between the real parts of two adjacent zeros of the zeta function is at least on the order of  $1/x$ where $x$ is the modulus of one of the zeros. For comparison, the average gap is something on the order of $1/\log(x).$ It is possible and even likely that isolated small gaps would give rise to Lehmer zeros or other small gaps, although we do not see an exact connection in general.  

Assuming this spacing conjecture, our results show that our hierarchy of relaxations for the Riemann hypothesis give rise to zero-free regions for the zeta function near the critical line.
For example, the $N=1$ case paired with Theorem \ref{thm:1} gives
\begin{corollary}
	If $1$-HRRH and the spacing conjecture are true
	and if some zero $\rho_k$ of the zeta function is not on the critical line, then
	\[ \left| \Im (\rho_k) \right|  \left| \tfrac{1}{2} - \Re(\rho_k)\right|  \geq \frac{159\sqrt{3}}{2\pi}.\]
\end{corollary}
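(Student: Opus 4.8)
The plan is to invoke Theorem~\ref{thm:1} directly and then unwind the change of variables from Section~\ref{sec:rh}. Theorem~\ref{thm:1} applies to $\Lambda$ provided $\Lambda$ is an entire function of genus one with $\Lambda(0) \neq 0$ and $\Lambda$ lies in the first order Laguerre-P\'olya class. The first two hypotheses are built into the setup: $\Lambda(z) = \Xi(\sqrt{z})$ is entire of genus one (this is standard for $\Xi$), and $\Lambda(0) = \Xi(0) \neq 0$ since $\zeta$ has no zero arising from $z=0$ after the trivial zeros and pole are factored out. The hypothesis that $\Lambda \in 1$-LP is exactly the assumption that $1$-HRRH holds. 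And the spacing conjecture gives that $\Lambda$ is spaced with spacing constant $c = \lambda_6 - \lambda_5 > 159$, so in particular $c$ is finite and positive and the aperture $\kappa = b/c$ is well-defined.

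First I would apply Theorem~\ref{thm:1} to conclude $\kappa = b/c \geq \sqrt{3}/\pi$, hence $b \geq c \cdot \sqrt{3}/\pi > 159 \sqrt{3}/\pi$, where $b = \inf_k \{ |\Im(\lambda_k)| : \Im(\lambda_k) \neq 0 \}$ is the height of $\Lambda$. Next I would translate the height bound on the $\lambda_k$ into a statement about the zeros $\rho_k$ of $\zeta$. Recall $\lambda_k = \eta_k^2$ where $\eta_k = \Im(\rho_k) + i(\tfrac12 - \Re(\rho_k))$. Writing $u = \Im(\rho_k)$ and $v = \tfrac12 - \Re(\rho_k)$, one computes $\eta_k^2 = (u^2 - v^2) + 2iuv$, so $\Im(\lambda_k) = 2uv = 2\,\Im(\rho_k)\bigl(\tfrac12 - \Re(\rho_k)\bigr)$. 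If $\rho_k$ is off the critical line then $\Re(\rho_k) \neq \tfrac12$, and (assuming $\rho_k$ is not real, which it is not since $\zeta$'s nontrivial zeros have imaginary part in $(0,\infty)$ up to conjugation) we get $\Im(\lambda_k) \neq 0$, so $|\Im(\lambda_k)| \geq b$. Therefore $2 |\Im(\rho_k)| \, |\tfrac12 - \Re(\rho_k)| = |\Im(\lambda_k)| \geq b > 159\sqrt{3}/\pi$, which upon dividing by $2$ yields exactly
\[
 |\Im(\rho_k)| \, \bigl|\tfrac12 - \Re(\rho_k)\bigr| \;\geq\; \frac{159\sqrt{3}}{2\pi},
\]
as claimed.

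The only genuinely delicate point is the bookkeeping around the change of variables — making sure that a zero $\rho_k$ of $\zeta$ off the critical line really does produce a $\lambda_k$ with nonzero imaginary part (so that it contributes to the infimum defining $b$ rather than being discarded), and that the complex-conjugate pairing among the $\rho_k$ is consistent with the pairing among the $\lambda_k$ assumed implicitly in the definitions of $c$ and $b$. Since $\rho_k$ and $1 - \overline{\rho_k}$ are both zeros, one checks that the corresponding $\eta$ values are related so that their squares are complex conjugates, keeping the structure of $\Lambda$ real on the real line, consistent with $1$-HRRH. Once this is confirmed, the inequality above with the strict $>159$ from the spacing conjecture immediately gives the stated (non-strict) bound.
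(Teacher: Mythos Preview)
Your proposal is correct and follows essentially the same route as the paper: apply Theorem~\ref{thm:1} to $\Lambda$ to obtain $\kappa = b/c \ge \sqrt{3}/\pi$, use the spacing conjecture for $c>159$, and unwind the change of variables $\lambda_k = \eta_k^2$ to compute $\Im(\lambda_k) = 2\,\Im(\rho_k)\bigl(\tfrac12 - \Re(\rho_k)\bigr)$, so that $|\Im(\lambda_k)| \ge b$ yields the stated bound. Your additional care in checking the hypotheses of Theorem~\ref{thm:1} and the conjugate-pair bookkeeping is a slight elaboration on the paper's more terse argument, but the substance is identical.
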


This lower bound is not sharp and Remark \ref{rem:improve} gives one argument improving this estimate.
Similarly, if $N \ge 5$, Theorem \ref{thm:2} implies
\begin{corollary}
	There is a constant $M$ independent of $N$ (and $\Lambda$) such that if
	$N$-HRRH and the spacing conjecture are true
	and if some zero $\rho_k$  of the zeta function is not on the critical line, then
	\[\left|  \Im (\rho_k) \right| \left| \tfrac{1}{2} - \Re(\rho_k)\right|  \geq \frac{159 M}{2} N^{1/9}.\]
\end{corollary}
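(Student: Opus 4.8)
The plan is to read the corollary off Theorem~\ref{thm:2} by inverting its bound and translating through the change of variables \eqref{eqn:rhok}. Assume $N$-HRRH, so that $\Lambda$ is in the $N$-th order Laguerre-P\'olya class; since $\Lambda(z) = \Xi(\sqrt z)$ is an entire function of genus one with $\Lambda(0) \neq 0$, all hypotheses of Theorem~\ref{thm:2} are met. Assume also the spacing conjecture (Conjecture~\ref{con:ss}), so that the spacing constant $c$ of $\Lambda$ from \eqref{eqn:c} satisfies $c \geq 159$; in particular $\Lambda$ is spaced and its aperture $\kappa = b/c$ is a well-defined element of $(0,\infty]$.

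First I would invert Theorem~\ref{thm:2}, which gives
\[
	N \;\leq\; \frac{\ln\!\bigl(\tfrac{4\pi^2}{3}+4\bigr)}{\ln 2} \;+\; \frac{\pi^3\sqrt3}{\ln 2}\,\bigl(\kappa+\sqrt{1+\kappa^2}\bigr)^6 \,(1+\kappa^2)^{3/2}.
\]
Since the constant term equals $\log_2\!\bigl(\tfrac{4\pi^2}{3}+4\bigr) < 5$, for $N \geq 5$ the second term on the right is bounded below by a positive multiple of $N$; and because $\kappa + \sqrt{1+\kappa^2} \leq 2\sqrt2\,(1+\kappa)$ and $(1+\kappa^2)^{3/2} \leq (1+\kappa)^3$, that term is at most an absolute constant times $(1+\kappa)^9$. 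Rearranging yields $\kappa \geq M\,N^{1/9}$ for an absolute constant $M>0$ and all $N \geq 5$ (after possibly replacing $M$ by a smaller absolute constant to cover the ``$+1$'' and the transition region). This step — extracting a clean power law with a constant independent of $N$ and of $\Lambda$ — is the only place any real care is needed, and it is routine once the threshold $N \geq 5$, which is exactly the range where the bound of Theorem~\ref{thm:2} is non-vacuous, is in hand.

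Finally I would translate back. Suppose some zero $\rho_k$ of $\zeta$ (with $\Im(\rho_k)>0$, per our indexing) has $\Re(\rho_k) \neq \tfrac12$. Then, writing $\eta_k = \Im(\rho_k) + i(\tfrac12 - \Re(\rho_k))$ as in \eqref{eqn:rhok}, both the real and imaginary parts of $\eta_k$ are nonzero, so $\lambda_k = \eta_k^2$ is a non-real zero of $\Lambda$ with
\[
	\Im(\lambda_k) \;=\; 2\,\Im(\rho_k)\,\bigl(\tfrac12 - \Re(\rho_k)\bigr) \;\neq\; 0.
\]
By the definition \eqref{eqn:b} of the height, $2\,|\Im(\rho_k)|\,\bigl|\tfrac12 - \Re(\rho_k)\bigr| = |\Im(\lambda_k)| \geq b = \kappa c \geq 159\,\kappa \geq 159\,M\,N^{1/9}$, which rearranges to the asserted inequality $|\Im(\rho_k)|\,\bigl|\tfrac12 - \Re(\rho_k)\bigr| \geq \tfrac{159 M}{2}\,N^{1/9}$. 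The constant $M$ arises only from Theorem~\ref{thm:2}, so it is independent of $N$ and of $\Lambda$, as claimed.
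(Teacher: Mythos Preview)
Your approach is exactly the paper's: apply Theorem~\ref{thm:2} to $\Lambda$, invert to bound $\kappa$ below by a constant times $N^{1/9}$, and then translate via $|\Im(\lambda_k)| = 2\,|\Im(\rho_k)|\,|\tfrac12-\Re(\rho_k)| \geq b = \kappa c \geq 159\,\kappa$.

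The one step that is not quite complete is the assertion that merely shrinking $M$ handles ``the $+1$ and the transition region.'' From your estimates you obtain only $1+\kappa \geq M' N^{1/9}$, and since the right side of Theorem~\ref{thm:2} has value $C_1+C_2\approx 81$ at $\kappa=0$, Theorem~\ref{thm:2} by itself is consistent with $\kappa=0$ for every $N$ up to about $81$; no positive $M$ can be extracted for those $N$ from Theorem~\ref{thm:2} alone, no matter how small you take it. The fix is immediate and is exactly why the paper's short proof cites \emph{both} Theorems~\ref{thm:1} and~\ref{thm:2}: since $N$-LP is contained in $1$-LP, Theorem~\ref{thm:1} supplies the a~priori bound $\kappa \geq \sqrt{3}/\pi$, which covers the finitely many small $N$ and lets a single absolute $M>0$ work for all $N \geq 5$.
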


To prove these corollaries, assume some $\rho_k$ is not on the critical line and without loss of generality, assume $\Im(\rho_k) >0.$ Then $\lambda_k :=\eta_k^2$ (defined in \eqref{eqn:rhok}) is a zero of $\Lambda$ with $\Im(\lambda_k) =  2 \Im(\rho_k)(\tfrac{1}{2} - \Re(\rho_k)) \ne 0$. As $\kappa=b/c$ where $c=159$ and $b$ is defined in \eqref{eqn:b}, we have
\[ \frac{2 \Im(\rho_k)|\tfrac{1}{2} - \Re(\rho_k)|}{159}= \frac{|\Im(\lambda_k)| }{159} \ge \frac{b}{159} = \kappa,\]
and the results follow from the $\kappa$ inequalities in Theorems \ref{thm:1} and \ref{thm:2}.

These results are rather odd; zero free regions, as developed classically by de la Vall\'ee Poussin, Littlewood, Chudakov \cite{Poussin, LittlewoodZeros, Chudakov}, and in more modern terms by Cheng, Ford and other authors \cite{Cheng, KevinFord}, usually say that the zeros of the Riemann zeta function have to be away from the $s=1$ line. Instead, ours say that there are no zeros close to the critical line $s = 1/2$, see Figure \ref{fig:rh}. So, if  $N$-HRRH holds to some order $N$ but then suddenly fails, then the Riemann hypothesis has to fail badly-- the zeros need to be, in some sense, far away from the critical line. These findings appear to align with qualitative conclusions suggested by \cite{Farmer2}; indeed, Farmer's work implies that if a function satisfies a family of conditions including $1$-HRRH, then its non-real zeros should in some sense be far away from the real line.

\begin{figure}[h!]
  \centering
  \begin{subfigure}[b]{0.25\textwidth}
  \centering
    \begin{tikzpicture}[scale=.5]
      \begin{axis}[
          domain=3:5, 
          xmin=-.01, xmax=1.2,
          ymin=3, ymax=5, 
          samples=1001,
          axis y line=middle,
          axis x line=none,
          ticks=none
        ]
        \addplot[dashed,name path = C](1/2,x);
        \addplot[dashed,ultra thick,name path=S](1,x);
        \addplot[draw=red, name path=F, thick] ({  1 -  1/ (49.13* (ln(x))^(2/3) * (ln(ln(x)))^(1/3))},{x});
        \addplot[red,opacity=.4] fill between [of=F and C];
      \end{axis}
    \end{tikzpicture}
    \caption{Ford's result about the locations of allowable zeros of the zeta function. Shown here is the range from $|t|=3$ to $|t|=5$.}
  \end{subfigure}
  \qquad
  \begin{subfigure}[b]{0.25\textwidth}
  \centering
    \begin{tikzpicture}[scale=.5]
      \begin{axis}[
          domain=1:800, 
          xmin=-.01, xmax=5,
          ymin=10, ymax=499, 
          samples=1001,
          axis y line=middle,
          axis x line=none,
          ticks=none
        ]
        \addplot[dashed,name path = C](1/2,x);
        \addplot[dashed,ultra thick,name path=S](1,x);
        \addplot[white, name path = B](250,x);
        \addplot[draw=blue, name path=U, thick] ({1/2+159*sqrt(3)/(2*3.14*x)},{x});
        \addplot[blue,opacity=.4] fill between [of=B and U];
      \end{axis}
    \end{tikzpicture}
      \caption{Assuming $1$-HRRH and the spacing conjecture,  any zeros off the critical line must be in the blue region. Shown here is the range from $|t|=10$ to $|t|=500$.}
  \end{subfigure}
  \qquad
  \begin{subfigure}[b]{0.25\textwidth}
  \centering
    \begin{tikzpicture}[scale=.5]
      \begin{axis}[
          domain=70:150, 
          xmin=-.01, xmax=1.2,
          ymin=70, ymax=100, 
          samples=1001,
          axis y line=middle,
          axis x line=none,
          ticks=none
        ]
        \addplot[dashed,name path = C](1/2,x);
        \addplot[dashed,ultra thick,name path=S](1,x);
        \addplot[white, name path = B](20,x);
        \addplot[draw=blue, name path=U, thick] ({1/2+159*sqrt(3)/(2*3.14*x)},{x});
        \addplot[blue,opacity=.4] fill between [of=B and U];
        \addplot[draw=red, name path=F, thick] ({  1 -  1/ (49.13* (ln(x))^(2/3) * (ln(ln(x)))^(1/3))},{x});
        \addplot[red,opacity=.4] fill between [of=F and C];
      \end{axis}
    \end{tikzpicture}
      \caption{Our region in blue and Ford's region in red from $|t|=70$ to $|t|=100$.}
  \end{subfigure}
  \caption{Allowable regions for zeros ($\sigma+it$) of the zeta function. The heavy dashed line is $\sigma=1$ and the light dashed line is $\sigma=\frac12$.} \label{fig:rh}
\end{figure}
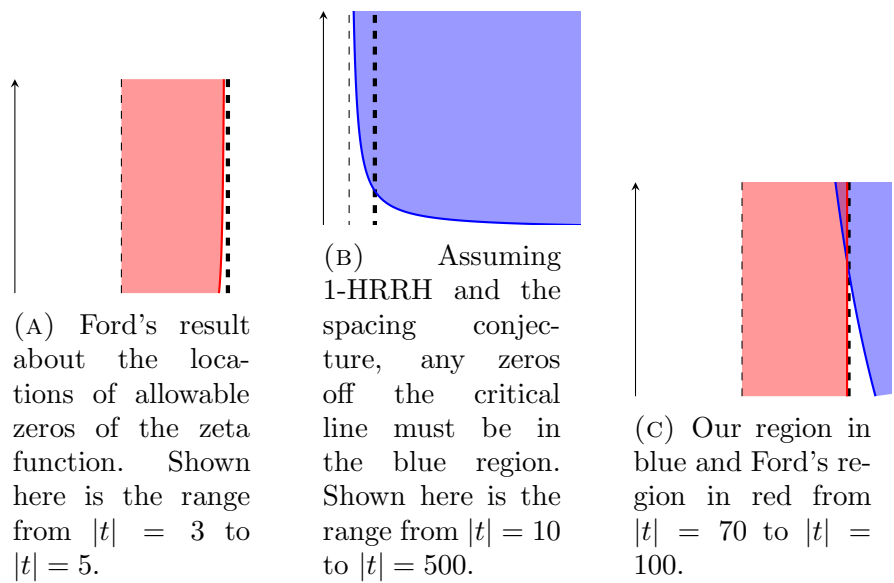


It is worth noting that our approach generally fits into the P\'olya-Jensen approach \cite{polya1,polya2} to the Riemann hypothesis, which is based upon the belief that there is some naturally occurring self-adjoint operator
with eigenvalues equal to the zeros of $\Lambda.$
Other Laguerre-P\'olya based approaches to the Riemann hypothesis include
deBranges' approach \cite{debrangesjfa}, Li's criterion \cite{LiApproach}, and
Rodgers and Tao's  establishment that the de Bruijn-Newman constant is non-negative \cite{RodgersTao}.

Related exciting results on the asymptotic hyperbolicity of certain Jensen polynomials appeared in recent work by Griffen et al.~in \cite{Ono1,Ono2}. 
 Their ideas uses power series coefficients derived from $\Lambda$ at a single point, but as discussed in the critique \cite{Farmer1}, the results do not appear to have direct implications for the Riemann hypothesis.  Our methods in some sense resolve the apparent disharmony between the Griffen et al.~and Farmer perspectives; we use power series coefficients derived from $\Lambda$ at a number of different points simultaneously and under various conjectures, are able to draw conclusions about zero-free regions.

\subsection{Alternate Spacing Regimes} \label{subsec:ssc} 
In this section, we mention two alternate (non-constant) spacing regimes for $\Lambda$, one stronger than constant spacing and one weaker than constant spacing. These could (in theory) be used in place of Conjecture \ref{con:ss} to apply our main theorems to $\Lambda$ and hence, draw conclusions about the zeros of the zeta function. 
\subsubsection{Strong Spacing} 
Under the assumption of the Riemann hypothesis, standard conjectures informed by random matrix theory \cite{Montgomery1, Odlyzko} imply that the zeros of the zeta function display quadratic repulsion. This implies the standard conjecture that $|\rho_{j+1} - \rho_j |< |\rho_j|^{-\alpha}$ at most finitely many times for $\alpha > 1/3.$ If one converts this to a statement about the zeros of $\Lambda$, it says: there is a constant $C$ (depending on $\gamma$) such that
 \begin{equation} \label{eqn:ssc1} \lambda_{j+1}-\lambda_j \geq C \lambda_j^{\gamma}\end{equation}
 for $\gamma \in[0, 1/3)$.

 If we do not assume the Riemann hypothesis (and hence, allow the $\lambda_k$ to lie off of the real line), one way to generalize  \eqref{eqn:ssc1} is the following family of conjectures:
	
		\begin{conjecture}[Strong spacing conjecture] \label{con:ssc}
		Let $\gamma \in [0, 1/3).$ There exists a $C> 0$ depending on $\gamma$ such that
			\[\Re(\lambda_{j+1}-\lambda_j) \geq C \Re(\lambda_j)^{\gamma}\]
		for all $j$ with $\lambda_j \ne \bar{\lambda}_{j+1}$ when $\Im(\lambda_j) \ne 0$.
	\end{conjecture}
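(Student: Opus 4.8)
The statement is posed as a conjecture, so what follows is a plan for \emph{deriving} it from standard conjectural input in analytic number theory rather than an unconditional proof; the mechanism is to transport the classical spacing conjecture for the ordinates of $\zeta$ through the change of variables $\lambda_k = \eta_k^2$ set up in \eqref{eqn:rhok}.

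First I would fix the ordering, listing the nontrivial zeros $\rho_k = \tfrac12 + i\eta_k$ with $\Im(\rho_k)>0$ by increasing ordinate $t_k := \Im(\rho_k) = \Re(\eta_k)$ (breaking a tie between a zero $\sigma + it$ and its reflection $(1-\sigma)+it$ arbitrarily) and ordering the $\lambda_k = \eta_k^2$ to match. Writing $\eta_k = t_k + i\delta_k$ with $\delta_k = \tfrac12 - \Re(\rho_k)$, so that $|\delta_k| < \tfrac12$, one has $\Re(\lambda_k) = t_k^2 - \delta_k^2$ and hence
\[ \Re(\lambda_{j+1}-\lambda_j) = (t_{j+1}-t_j)(t_{j+1}+t_j) - (\delta_{j+1}^2 - \delta_j^2), \]
with the last term confined to $[-\tfrac14, \tfrac14]$. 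In the cases the conjecture addresses — $\lambda_j \ne \bar\lambda_{j+1}$ with $\Im(\lambda_j) \ne 0$ — the zeros $\rho_j$ and $\rho_{j+1}$ have distinct ordinates, so $t_j$ and $t_{j+1}$ are consecutive distinct ordinates of $\zeta$; feeding in the standard conjectural bound $t_{j+1} - t_j \ge t_j^{-\alpha}$ for all but finitely many $j$, valid for every $\alpha > 1/3$, yields $\Re(\lambda_{j+1}-\lambda_j) \ge t_j^{1-\alpha} - \tfrac14 \gg t_j^{2\gamma} \ge \Re(\lambda_j)^{\gamma}$ as soon as $1-\alpha > 2\gamma$. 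For a prescribed $\gamma < 1/3$ this is achieved by any $\alpha \in (1/3,\, 1-2\gamma)$, an interval that is nonempty precisely because $\gamma < 1/3$; the finitely many exceptional $j$, together with the first few $\lambda_j$ (which are bounded away from $0$ and, conjecturally, distinct), are swept into the constant $C$. Under RH everything simplifies, since $\delta_k \equiv 0$ and $|\rho_k|$ is comparable to $t_k$, and one recovers \eqref{eqn:ssc1}.

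The honest point is that there is no residual technical obstacle \emph{within} this reduction — the whole difficulty sits in the hypothesis. Proving the displayed inequality would in particular show that consecutive ordinates of $\zeta$ are separated by at least $|t|^{-1/3-\varepsilon}$ outside a finite set, a prediction of GUE pair-correlation heuristics \cite{Montgomery1, Odlyzko} that is well beyond current unconditional or conditional technology, where gap lower bounds are only a fixed constant multiple of the average gap. So the actual content of this proposal is structural: \emph{granting} the standard spacing conjecture for $\zeta$, the quadratic substitution $\lambda = \eta^2$ halves the admissible exponent, and the surviving range is exactly $\gamma \in [0, 1/3)$ — which is why the conjecture is stated with that range and why it lines up with the hypotheses of Theorem \ref{thm:3}.
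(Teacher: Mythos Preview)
The paper offers no proof of this statement --- it is presented as an open conjecture, and the text immediately following it explicitly says ``It is not clear to us whether this spacing conjecture should be true, especially for $\gamma$ near $1/3$.'' You correctly recognize this and frame your write-up as a conditional derivation from the standard ordinate-spacing conjecture for $\zeta$, which is exactly the heuristic the paper uses in the paragraph preceding the conjecture to motivate the range $\gamma\in[0,1/3)$.

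Your reduction is in fact slightly more explicit than the paper's: the paper motivates the inequality under RH (where $\delta_k\equiv 0$ and $\lambda_k=t_k^2$) and then voices doubt about the non-RH case on the grounds that repulsion might not concentrate in the real direction; you observe that since $|\delta_k|<\tfrac12$ the correction $\delta_{j+1}^2-\delta_j^2$ is bounded by $\tfrac14$, so the derivation from the ordinate-spacing hypothesis goes through regardless of RH, with the bounded correction and the finitely many exceptional indices absorbed into $C$. That is a fair point, though of course the ordinate-spacing input is itself an RH/GUE heuristic, so the residual uncertainty the paper flags has simply been relocated to the hypothesis. One small gap to be aware of: your claim that $\lambda_j\ne\bar\lambda_{j+1}$ forces $t_j\ne t_{j+1}$ uses implicitly that the only way two $\lambda$'s can share a real part with $\Im(\lambda_j)\ne 0$ is via conjugation; if one allowed two unrelated zeros at the same ordinate this would need a word, but that scenario is already excluded by the simplicity assumptions in play.
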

	
It is not clear to us whether this spacing conjecture should be true, especially for $\gamma$ near $1/3$. Indeed, while it seems plausible that the zeros of $\Lambda$ should still repel each other in some sense, it is not clear whether that repelling would be concentrated in the real direction. Still, it seems worth investigating whether this conjecture might be supported by known paradigms, at least for $\gamma$ in some subinterval of $[0, 1/3).$
	
If Conjecture \ref{con:ssc} holds for some $\gamma >0$ and one assumes $1$-HRRH, Theorem \ref{thm:3} applies to $\Lambda$ and gives an intricate condition on any non-real zeros of $\Lambda$.  We leave it to future work (or an interested reader) to further investigate the zero constraints implied by $N$-HRRH (for $N \ge 1$) paired with Conjecture \ref{con:ssc} for different values of $\gamma$.
		
\subsubsection{Weak spacing} \label{ss:weak}

We say $\Lambda$ is \textbf{weakly spaced} if there is a $\gamma <0$ and
a $C> 0$ depending on $\gamma$ such that
			\[\Re(\lambda_{j+1}-\lambda_j) \geq C \Re(\lambda_j)^{\gamma}\]
		for all $j$ with $\lambda_j \ne \bar{\lambda}_{j+1}$ when $\Im(\lambda_j) \ne 0$.
Here, we note that one may be able to apply our results from Sections \ref{sec:spaced} and \ref{StrongSpace} to $\Lambda$ even if $\Lambda$ is only weakly spaced, via the following recipe.

	Let $\Lambda_0 = \Lambda$ and define a sequence of entire functions iteratively via the following relation
		$$\Lambda_n(z) = \Lambda_{n-1}(\sqrt{z})\Lambda_{n-1}(-\sqrt{z})$$
 and observe that the zeros of $\Lambda_n$ are given by $\lambda_j^{2^n}.$
	Suppose that for some $\gamma \in \mathbb{R}$ and all $j$ with $\lambda_j \ne \bar{\lambda}_{j+1}$ when $\Im(\lambda_j) \ne 0$, 
		\[\Re(\lambda_{j+1}-\lambda_j) \geq C \Re(\lambda_j)^{\gamma}.\]
	Then 
	\[\Re(\lambda_j)^{2n-1} \cdot \Re(\lambda_{j+1}-\lambda_j) \geq C \Re(\lambda_j)^{\gamma +2n-1}.\]
	If the real parts of the $\lambda_j$ sufficiently dominate the imaginary parts of the $\lambda_j$ (via a relation that would depend on $C$ and $n$),
	then this should yield a related constant $\hat{C}$ with
		\[\Re(\lambda_{j+1}^{2^n}-\lambda_j^{2^n}) \geq \hat{C}|\Re(\lambda^{2n}_j)|^{\frac{\gamma+2n-1}{2n}},\]
		as long as $\lambda_{j+1} \ne \bar{\lambda}_j$ when $\Im(\lambda_j) \ne 0$. 
	Then for sufficiently large $n,$ the zeros of $\Lambda_n$ would exhibit strong spacing and so, one could apply our results from Sections \ref{sec:spaced} and \ref{StrongSpace} to $\Lambda_n$.

\section{Hankel Matrices associated to Hadamard Products} \label{sec:prelim}

Before proving our main results, we need to collect some preliminary information about the Hankel matrices defined in \eqref{eqn:Nmatrix}. To that end, let $f$ be defined via Hadamard product as in \eqref{eqn:entiref} with $f(0) \ne 0$.  As mentioned earlier,  we say $f$ is in the  \textbf{$N$-th order Laguerre-P\'olya class} if $d_2 \in \mathbb{R}$ and \eqref{eqn:Nmatrix} holds for all $x \in \mathbb{R} \setminus  \{ \lambda_i\}_{i=1}^{\infty}$. Let us quickly derive the formulas for the entries of each $A_N(x)$. First, one can easily compute 
 \begin{equation} \label{eqn:g} g(z): = - \tfrac{d}{dz} \left( \log\left( f(z) \right ) \right) =- \frac{f'(z)}{f(z)} = -d_2+ \sum_{i=1}^{\infty} \frac{z}{\lambda_i(\lambda_i-z)}.\end{equation}
  For $z$ near each $x \in  \mathbb{R} \setminus \{ \lambda_i\}$, i.e. if $|z-x| < |\lambda_i-x|$, we can then write
 \[ \frac{z}{\lambda_i(\lambda_i - z)} =\frac{x}{\lambda_i(\lambda_i-x)} + \sum_{k=1}^{\infty} \left(\frac{1}{\lambda_i-x} \right)^{k+1} (z-x)^k.\]
For each  $ x\in \mathbb{R} \setminus \{ \lambda_i\}_{i=1}^\infty$, we can expand $g$  in a power series $g(z) = \sum_k a_k(x) (z-x)^k$ around any $ x$ as follows:
\[ g(z)  = -d_2 + \sum_{i=1}^{\infty} \frac{x}{\lambda_i(\lambda_i-x)} + \sum_{k=1}^{\infty} \left( \sum_{i=1}^{\infty} \left(\frac{1}{\lambda_i-x} \right)^{k+1}\right) (z-x)^k,\]
where our assumption that $\sum_i \frac{1}{|\lambda_i|^2} < \infty$ implies that the equality holds for all $z$ in some nontrivial interval centered around $x$.

The $N$-LP  property implies the following polynomial condition:

\begin{lemma} \label{lem:poly} Fix $x \in \mathbb{R} \setminus  \{ \lambda_i\}_{i=1}^{\infty}$ and $N \in \mathbb{N}$ and assume \eqref{eqn:Nmatrix} holds. Then for all $t \in \mathbb{R}$, 
\begin{equation}
	\label{eqn:q}  \sum_{i=1}^{\infty} q\left( \frac{t}{ \lambda_i-x}\right)^2 \ge 0,
\end{equation}
for all polynomials $q$ with real coefficients such that $q(0)=0$ and $\deg q \le N.$
\end{lemma}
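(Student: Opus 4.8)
The plan is to show that the quadratic form $\sum_i q(t/(\lambda_i-x))^2$ is exactly $\mathbf{q}^* A_N(x) \mathbf{q}$ for an appropriate coefficient vector $\mathbf{q}$, at least formally, and then invoke the hypothesis \eqref{eqn:Nmatrix}. Write $q(w) = \sum_{j=1}^{N} q_j w^j$ with $q_j \in \mathbb{R}$; the condition $q(0)=0$ is what forces the sum to start at $j=1$, matching the index conventions of $A_N(x)$. First I would record that, by the power series expansion of $g$ derived just above the lemma, the entries of $A_N(x)$ are $A_N(x)_{jk} = a_{j+k-1}(x) = \sum_{i=1}^\infty (\lambda_i - x)^{-(j+k)}$, where the sum converges absolutely because $\sum_i |\lambda_i|^{-2} < \infty$ (and only finitely many $\lambda_i$ lie within any bounded distance of $x$, so the tail behaves like $\sum |\lambda_i|^{-(j+k)} \le \sum |\lambda_i|^{-2}$ up to a constant).

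Next I would compute, for fixed real $t$ and the vector $\mathbf{q} = (q_1 t, q_2 t^2, \dots, q_N t^N)^T$ — equivalently absorbing the powers of $t$ into the coefficients — that
\[
\mathbf{q}^* A_N(x) \mathbf{q} = \sum_{j,k=1}^{N} q_j t^j \, q_k t^k \sum_{i=1}^\infty \frac{1}{(\lambda_i-x)^{j+k}} = \sum_{i=1}^\infty \left( \sum_{j=1}^N \frac{q_j t^j}{(\lambda_i-x)^j} \right)^2 = \sum_{i=1}^\infty q\!\left( \frac{t}{\lambda_i - x}\right)^2,
\]
where the interchange of the (finite) $j,k$ sums with the (absolutely convergent) $i$-sum is justified by the convergence remark above, and the middle equality is just recognizing the square of the inner sum. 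Since $t^j, t^k, q_j, q_k$ are all real, $\mathbf{q}$ has real (hence, interpreted as a complex vector, the conjugate is itself) entries, so $\mathbf{q}^* A_N(x)\mathbf{q} = \mathbf{q}^T A_N(x) \mathbf{q} \ge 0$ by \eqref{eqn:Nmatrix}. This gives \eqref{eqn:q}.

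One subtlety to handle carefully: the sum $q(t/(\lambda_i-x))^2$ must be shown to converge for the statement to make sense, but this is immediate from the same absolute-convergence estimate, since $q(w)^2$ is a polynomial in $w$ with lowest-degree term $w^2$, so $|q(t/(\lambda_i-x))^2| \le C_t (\lambda_i-x)^{-2}$ for all but finitely many $i$. I do not expect a genuine obstacle here; the only thing requiring a moment's care is the bookkeeping of indices (the $+1$ shifts between $a_{j+k-1}$, the matrix entry $(j,k)$, and the exponent $j+k$) and making sure the $q(0)=0$ hypothesis is used precisely to align the polynomial with the matrix size — a degree-$N$ polynomial vanishing at $0$ has exactly $N$ free coefficients $q_1,\dots,q_N$, matching the $N\times N$ matrix $A_N(x)$.
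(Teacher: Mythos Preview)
Your proof is correct and follows essentially the same approach as the paper: both absorb the powers of $t$ into the coefficient vector (your $\mathbf{q}$ is exactly the paper's $\vec{d}$ with $d_j = t^j q_j$), expand the quadratic form $\vec{d}^T A_N(x)\vec{d}$ using $a_{j+k-1}(x)=\sum_i(\lambda_i-x)^{-(j+k)}$, swap sums, and recognize the square. Your additional remarks on absolute convergence and on the role of $q(0)=0$ are correct and slightly more explicit than the paper's version, but the argument is the same.
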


\begin{proof} Let $q(z) = \sum_{j=1}^N q_j z^j$ with each $q_j \in \mathbb{R}.$ Fix  $\vec{d} =[ d_1 \dots d_N]^T \in \mathbb{R}^N$ with each $d_j = t^j q_j$. Then by our assumptions and the definition of $a_n(x)$, we have
\[ 0 \le \left \langle  \begin{bmatrix} a_1(x) & a_2(x) & \dots & a_{N}(x)  \\
 a_2(x) & a_3(x) & & \\
 \vdots & &\ddots &  \vdots\\
 a_{N}(x) & &\dots & a_{2N-1}(x) 
  \end{bmatrix} 
  \begin{bmatrix} d_1 \\ d_2 \\ \vdots \\ d_N \end{bmatrix},    \begin{bmatrix} d_1 \\ d_2 \\ \vdots \\ d_N \end{bmatrix}\right \rangle = \sum_{i=1}^{\infty} \left \langle A^i \vec{d}, \vec{d} \right \rangle, \]
where $A^i$ denotes the $N\times N$ matrix whose $jk^{th}$ entry is $(\frac{1}{\lambda_i-x})^{j+k}.$ Then
\[ 
\begin{aligned} \left \langle A^i \vec{d}, \vec{d} \right \rangle = \sum_{j,k=1}^N d_k d_j\left(\tfrac{1}{\lambda_i-x}\right)^{j+k} 
&= \left( \sum_{j=1}^N d_j \left(\tfrac{1}{\lambda_i-x}\right)^j \right)^2 \\
&=\left( \sum_{j=1}^N q_j \left(\tfrac{t}{\lambda_i-x}\right)^j \right)^2 = q\left( \frac{t}{ \lambda_i-x}\right)^2.
\end{aligned}\]
Combining our two equations immediately gives \eqref{eqn:q}.
\end{proof}


\section{Spaced functions}  \label{sec:spaced}
Let $f$ be a spaced function defined via a Hadamard product as in \eqref{eqn:entiref} with $f(0)\ne 0$  and let $c,b,\kappa$ denote the respective spacing constant, height, and aperture of $f$, see \eqref{eqn:c} and \eqref{eqn:b}.  Write each zero $\lambda_i = \alpha_i + i \beta_i$ for $\alpha_i, \beta_i \in \mathbb{R}$. 

The following result will be used implicitly in many of our estimates.

\begin{lemma} \label{lem:1O} Let $f$ be in the first order Laguerre-P\'olya class. If $\lambda_j$ is a zero of $f$, then  $\bar{\lambda}_j$ is also a zero of $f$. 
\end{lemma}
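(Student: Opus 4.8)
The plan is to show that the coefficient functions $a_n(x)$ are all real-valued on $\mathbb{R}\setminus\{\lambda_i\}$, and then extract from this the conjugate-symmetry of the zero set. Recall $a_n(x) = \sum_i (\lambda_i - x)^{-(n+1)}$ for $n \ge 1$ (and similarly with $-d_2$ corrections for the lower-order terms that do not matter here). For $f$ in $1$-LP, the $1\times 1$ matrix condition \eqref{eqn:Nmatrix} forces $a_1(x) = \sum_i (\lambda_i - x)^{-2} \ge 0$ for all real $x$ outside the zero set; in particular $a_1(x)\in\mathbb{R}$. I would first argue that this alone forces the nonreal zeros to occur in conjugate pairs: if $\lambda_j = \alpha_j + i\beta_j$ with $\beta_j \ne 0$ appeared without $\bar\lambda_j$ being a zero of the same multiplicity, then $a_1(x)$ would have a nonzero imaginary part for $x$ near $\alpha_j$, contradicting $a_1(x)\ge 0$.

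More precisely, the key steps are as follows. First, write $g(z) = -f'/f$ and note $a_1(x) = g'(x) = \sum_i (\lambda_i - x)^{-2}$, a meromorphic function of $x$ with double poles exactly at the $\lambda_i$. Second, observe that $\overline{a_1(\bar z)}$ is the same construction run with the zero multiset $\{\bar\lambda_i\}$ in place of $\{\lambda_i\}$ (since the $d_1,d_2,\ell$ data: $d_2$ is real by the $N$-LP hypothesis, $\ell$ is a real integer, and complex conjugation commutes with the Hadamard product). Third, since $a_1$ is real on the real line (it is $\ge 0$ there), the Schwarz reflection principle gives $a_1(z) = \overline{a_1(\bar z)}$ as meromorphic functions on $\mathbb{C}$. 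Hence $a_1$ and its reflection have the same pole set with the same orders, i.e. $\{\lambda_i\} = \{\bar\lambda_i\}$ as multisets. Since the poles of $a_1 = g'$ at $\lambda_j$ have order exactly $2$ regardless of the multiplicity of $\lambda_j$ as a zero of $f$, I would then also invoke $a_1 \ge 0$ more carefully — or simply look at the residue of $g = -f'/f$ itself, whose simple pole at $\lambda_j$ has residue equal to the multiplicity of $\lambda_j$ — to conclude the multiplicities match. Either way, $\bar\lambda_j$ is a zero of $f$ whenever $\lambda_j$ is.

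An alternative, perhaps cleaner, route uses Lemma \ref{lem:poly} directly with $N = 1$: for every real $t$ and every real $x$ outside the zero set, $\sum_i t^2 (\lambda_i - x)^{-2} \ge 0$, so $h(x) := \sum_i (\lambda_i - x)^{-2} \ge 0$ on $\mathbb{R}\setminus\{\lambda_i\}$, and then one proceeds by reflection exactly as above. I would present whichever is shorter given the paper's conventions; the reflection-principle argument is self-contained.

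The main obstacle I anticipate is purely bookkeeping: making sure the conjugation genuinely maps the Hadamard representation \eqref{eqn:entiref} of $f$ to the Hadamard representation with conjugated roots — this requires knowing $d_2 \in \mathbb{R}$ (which is part of the definition of being in any $N$-LP class, so it is available) and that $d_1$ being possibly nonreal is irrelevant since it only scales $f$ and does not affect $g = -f'/f$ or its derivatives. One should also be slightly careful that "zero of $f$" is interpreted with multiplicity and that the convergence $\sum_i |\lambda_i|^{-2} < \infty$ is conjugation-invariant, so the reflected product is again a legitimate genus-one function. None of this is deep, but it is the part where a careless argument could go wrong.
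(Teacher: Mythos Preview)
Your proposal is correct and follows essentially the same route as the paper: both arguments observe that the $1$-LP condition forces $g'(x)=a_1(x)$ to be real on $\mathbb{R}\setminus\{\lambda_i\}$, apply the Schwarz reflection principle to obtain $g'(z)=\overline{g'(\bar z)}$, and then read off conjugate-symmetry of the zero set of $f$ from conjugate-symmetry of the (order-two) pole set of $g'$. The paper's version is slightly leaner in that it does not pass through the interpretation of $\overline{a_1(\bar z)}$ as the construction for the conjugated roots, and it does not address multiplicities (the lemma as stated does not require them); otherwise the two arguments are the same.
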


\begin{proof} Recall that $g(z) =- \frac{ f'(z)}{f(z)}$. Looking at the power series expansion of $f$ near each zero, it is easy to see that
\[ g'(z) = \frac{f'(z)^2-f''(z)f(z)}{f(z)^2} \] 
has poles (of order two) exactly at the zeros of $f$. As $f$ is in the first order Laguerre-P\'olya class, \eqref{eqn:Nmatrix} with $N=1$ implies that 
\[ g'(x) \in \mathbb{R} \text{ for all } x\in \mathbb{R} \setminus \{ \lambda_i\}_{i=1}^{\infty}.\]
 Thus, we can apply the Schwarz reflection principle on any interval in $\mathbb{R} \setminus \{ \lambda_i\}_{i=1}^{\infty}$ to conclude that (except at the zeros of $f$):
 \[ g'(z) = \overline{ g'(\bar{z})}.\] 
 Then this implies $g'$ has a pole at $w\in \mathbb{C}$ if only if it has a pole at $\bar{w}.$ Combining our observations gives:
 $\lambda_j$ is zero of $f$ if and only if $\lambda_j$ is a pole of $g'$ if and only if $\bar{\lambda}_j$ is a pole of $g'$ if and only if $\bar{\lambda}_j$ is a zero of $f$, which establishes the claim.
\end{proof}

\begin{theorem} \label{lem:kappa} Let $f$ be in the first order Laguerre-P\'olya class with $f(0)\ne0$. Then $\kappa  \ge \tfrac{\sqrt{3}}{\pi}.$
\end{theorem}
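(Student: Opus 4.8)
The plan is to use the only scalar inequality that the first order Laguerre–P\'olya hypothesis provides, namely that
$a_1(x)=\sum_i (\lambda_i-x)^{-2}\ge 0$ for every $x\in\mathbb R\setminus\{\lambda_i\}$ (this is \eqref{eqn:Nmatrix} with $N=1$), and to evaluate it at a well-chosen point: the real part of a suspected non-real zero. First I would dispose of the trivial cases. If $f$ is not spaced then $c=0$ and $\kappa=\infty$ by convention; if $f$ has no non-real zero then the height $b$ in \eqref{eqn:b} is infinite and again $\kappa=\infty$. So assume $c>0$ and fix a non-real zero $\lambda_k=\alpha_k+i\beta_k$ with $\beta_k\ne 0$. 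It then suffices to prove $|\beta_k|\ge\tfrac{\sqrt3}{\pi}\,c$, since taking the infimum over all such $k$ gives $b\ge\tfrac{\sqrt3}{\pi}c$, hence $\kappa=b/c\ge\tfrac{\sqrt3}{\pi}$.

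Next I would record the geometry of the zero set forced by $c>0$. By Lemma \ref{lem:1O}, $\bar\lambda_k$ is also a zero. Moreover, from the definition \eqref{eqn:c} of $c$, any two zeros that are distinct and not complex conjugates of one another must have real parts at distance $\ge c$; in particular no real-part value can host more than two zeros (a conjugate pair, or a single real zero), and any zero with real part $\alpha_k$ must be $\lambda_k$ or $\bar\lambda_k$. This also shows $\alpha_k$ itself is not a zero of $f$ (a real zero at $\alpha_k$ together with $\lambda_k$ would force $c=0$), so $a_1(\alpha_k)$ is defined and $\ge 0$, and the convergence $\sum_i|\lambda_i-\alpha_k|^{-2}<\infty$ follows from the genus one condition $\sum_i|\lambda_i|^{-2}<\infty$.

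Then I would carry out the estimate. Split $a_1(\alpha_k)=\sum_i(\lambda_i-\alpha_k)^{-2}$ into the contribution of $\{\lambda_k,\bar\lambda_k\}$, which is $(i\beta_k)^{-2}+(-i\beta_k)^{-2}=-2/\beta_k^2$, plus the remaining sum $S$ over zeros whose real part is not $\alpha_k$. For each such zero $\lambda_j=\alpha_j+i\beta_j$ one has $\Re\,(\lambda_j-\alpha_k)^{-2}\le|\lambda_j-\alpha_k|^{-2}\le(\alpha_j-\alpha_k)^{-2}$. Since at most two zeros lie over each admissible real-part value and those values sit at distances $\ge c,2c,3c,\dots$ from $\alpha_k$ on either side, we get
\[
S\;\le\;2\cdot 2\sum_{n\ge 1}\frac{1}{(nc)^2}\;=\;\frac{4}{c^2}\cdot\frac{\pi^2}{6}\;=\;\frac{2\pi^2}{3c^2}.
\]
Combining this with $0\le a_1(\alpha_k)=-2/\beta_k^2+S$ gives $2/\beta_k^2\le 2\pi^2/(3c^2)$, i.e.\ $\beta_k^2\ge 3c^2/\pi^2$, so $|\beta_k|\ge\tfrac{\sqrt3}{\pi}c$, which completes the argument.

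I expect the main obstacle to be purely organizational rather than analytic: making sure that, once conjugate pairs are identified, the spacing constant genuinely separates the distinct real parts of the zero set by $c$, that no real-part value carries more than two zeros, and that $\alpha_k$ is not secretly a zero of $f$. Once those facts are pinned down, the remainder is just the convergent series $\sum 1/n^2$ together with the elementary bound $\Re\,w^{-2}\le|w|^{-2}$, and the constant $\sqrt3/\pi$ drops out. It is worth noting that this is precisely the $N=1$ instance of the general strategy (testing $A_N(x)$ against Taylor polynomials of $\sin$) described in the introduction, where for $N=1$ the test vector degenerates and one simply uses positivity of the single entry $a_1(x)$.
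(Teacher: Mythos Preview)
Your proof is correct and follows essentially the same approach as the paper: evaluate $a_1$ at the real part of a non-real zero, split off the contribution $-2/\beta_k^2$ from the conjugate pair, and bound the remaining terms by $\tfrac{2\pi^2}{3c^2}$ using the spacing of the real parts. The only cosmetic difference is that the paper picks a single zero with $|\beta|<b+\epsilon$ and lets $\epsilon\to 0$, whereas you establish $|\beta_k|\ge \tfrac{\sqrt3}{\pi}c$ for every non-real zero and then take the infimum; your version also spells out the combinatorics behind the $\tfrac{2\pi^2}{3c^2}$ bound more explicitly than the paper does.
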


\begin{proof} If $\kappa =\infty$, then we are done. 
Thus, without loss of generality, assume $f$ has at least one zero in $\mathbb{C}\setminus \mathbb{R}$. Fix $\epsilon >0$ and choose $\lambda_j$ so that $0<|\beta_j |< b +\epsilon$. By reordering, assume this zero is $\lambda_1$ and its complex conjugate is $\lambda_2.$ Then setting $\alpha_1= x$ in \eqref{eqn:Nmatrix} with $N=1$ gives
\[ 0 \le \Re \big( a_1(\alpha_1) \big) \le \sum_{i\ge 3} \left| \frac{1}{\lambda_i-\alpha_1}\right|^2 -\frac{2}{|\beta_1|^2}  
\le \frac{2\pi^2}{3c^2} - \frac{2}{|b+\epsilon|^2}.\]
Rearranging and letting $\epsilon \rightarrow 0$ implies $\kappa = \tfrac{b}{c} \ge \tfrac{ \sqrt{3}}{\pi}$.
\end{proof}
 As detailed in the following remark, the estimate in Theorem \ref{lem:kappa} can be improved. 
 
\begin{remark}\label{rem:improve} Define $\lambda_1, \lambda_2$ as in the proof of Theorem \ref{lem:kappa} and consider the zeros of $f$ whose real parts are closest to $\Re(\lambda_1) = \alpha_1$. The worst case scenarios (in terms of the estimates) are either simple real zeros at $\alpha_1  \pm c$ or complex zeros in conjugate pairs at some $\pm i \beta + (\alpha_1 \pm c)$ where $b \le |\beta|.$ 
Then (handling the closest zeros to $\lambda_1,\lambda_2$ separately), our estimates become
\[ 0 \le \max \left\{ \frac{4}{c^2+b^2}, \frac{2}{c^2} \right\} +\left( \frac{2\pi^2}{3c^2} -\frac{4}{c^2} \right) - \frac{2}{|b+\epsilon|^2}.\]
Applying that refined argument to all of the zeros of $f$ (excepting $\lambda_1, \lambda_2$) yields the estimate
\[ 0 \le  \sum_{n=1}^{\infty} \max \left\{ \frac{4}{c^2n^2+b^2}, \frac{2}{c^2n^2} \right\}  - \frac{2}{|b+\epsilon|^2}.\]
Rearranging terms and letting $\epsilon \rightarrow 0$ gives
\[ 1 \le \sum_{n=1}^{\infty} \max \left \{ \frac{2\kappa^2}{\kappa^2+n^2} , \frac{\kappa^2}{n^2} \right\} \le \sum_{n=1}^{\infty} \frac{2\kappa^2}{\kappa^2+n^2},\]
where the second inequality holds if $\kappa \le 1$ and the right-hand term is increasing in $\kappa$. Thus to obtain a bound on $\kappa$, we need only solve 
\[ 1 = \sum_{n=1}^{\infty} \frac{2\kappa^2}{\kappa^2+n^2} = \kappa \pi \coth( \kappa \pi) - 1\]
for $\kappa$. This yields $\kappa \approx 0.609566$ and so, if  $f$ is in the first order Laguerre-P\'olya class with $f(0)\ne0$, then $\kappa \ge 0.60956 > 0.551329 > \tfrac{\sqrt{3}}{\pi}$, the original bound from Theorem \ref{lem:kappa}.
\end{remark} 

Now, we prove our estimate for general $N$.

\begin{theorem} \label{thm:NLP} Let $f$ be an entire function of genus one such that $f(0)\neq 0.$
		If $f$ is in the N-th order Laguerre-P\'olya class then
		$$N \leq \frac{\ln\left( \frac{4\pi^2}{3}+4 \right)}{\ln 2} + \frac{\pi^3 \sqrt{3}}	{\ln 2} \left(\kappa + \sqrt{1+\kappa^2} \right)^6\left(1+\kappa^2\right)^{3/2} \approx \kappa^9. $$
\end{theorem}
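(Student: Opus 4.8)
The plan is to reprise the strategy of Theorem \ref{lem:kappa} and Lemma \ref{lem:poly}, but with a cleverly chosen test polynomial $q$ of degree $N$ rather than $q(z)=z$. Following the philosophy sketched in the introduction, the natural candidate is a Taylor polynomial of $\sin(rz)$ for a well-tuned scaling parameter $r>0$: such a $q$ is small on the real axis (where $\sin$ is bounded) and can be made large and of controlled sign at the point $1/(\lambda_1-x)$ corresponding to a non-real zero $\lambda_1=\alpha_1+i\beta_1$. As before, I would assume $\kappa<\infty$, fix $\epsilon>0$, pick a non-real zero $\lambda_1$ with $0<|\beta_1|<b+\epsilon$ together with its conjugate $\lambda_2=\bar\lambda_1$, set $x=\alpha_1$, and apply Lemma \ref{lem:poly}. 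This yields
\[
0\le \sum_i q\!\left(\tfrac{t}{\lambda_i-\alpha_1}\right)^2
= 2\,\Re\, q\!\left(\tfrac{t}{i\beta_1}\right)^2 + \sum_{i\ge 3} q\!\left(\tfrac{t}{\lambda_i-\alpha_1}\right)^2,
\]
and the goal is to choose $q$ and $t$ so the first term is very negative while the tail sum is comparatively small, forcing $N$ to be bounded in terms of $\kappa$.

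The key steps, in order, would be: (1) Write $q=S_N$, the degree-$N$ Taylor polynomial of $\sin$, so that $q(iy)^2 = (i\sinh y + (\text{error}))^2$; by choosing the argument $t/\beta_1$ in a range where $\sinh$ dominates the truncation error — which requires $N$ large relative to that argument, controlled via the Lagrange remainder — one gets $\Re\, q(t/(i\beta_1))^2 \approx -\sinh^2(t/|\beta_1|)$, a quantity that grows like $e^{2t/|\beta_1|}$. (2) Bound the tail $\sum_{i\ge 3} q(t/(\lambda_i-\alpha_1))^2$: the real parts $\alpha_i$ are spaced by at least $c$, so $|\lambda_i - \alpha_1|\ge c\lfloor (i\text{-ish})/2\rfloor$ after pairing conjugates, hence $|t/(\lambda_i-\alpha_1)|$ is small for most $i$, and on that regime $|q(w)|\le |\sin w| + |w|^{N+1}/(N+1)! \le \text{const}$ uniformly, so the tail is at most a constant multiple of $\sum_n (t/(cn))^{2}$-type series, i.e. $O(t^2/c^2)$ — crucially polynomial in $t$, not exponential. (3) Combine: we need the exponential gain $\sinh^2(t/|\beta_1|)$ to beat the polynomial loss $O(t^2/c^2)$, which it does once $t$ is a suitable multiple of $|\beta_1|$; but the remainder estimate in step (1) demands $N \gtrsim t/c$ roughly, i.e. $N$ must be at least of the order needed for the Taylor polynomial to track $\sin$ out to argument $\sim t/c$. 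Turning these inequalities around gives an upper bound on $N$ in terms of $\kappa = b/c$, and optimizing the free parameter $t$ (or rather the ratio $t/|\beta_1|$) produces the stated bound with its $(\kappa+\sqrt{1+\kappa^2})^6(1+\kappa^2)^{3/2}\approx\kappa^9$ growth. The factor $\frac{\ln(4\pi^2/3+4)}{\ln 2}$ and the $\ln 2$ denominators strongly suggest that the optimization is carried out on a dyadic/exponential scale — likely one chooses $t$ so that $2^{N}$ or $e^{cN}$-type quantities appear, and solves for $N$ by taking $\log_2$.

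The main obstacle is the tension in step (3): making the sine-truncation error negligible at the "detecting" point $t/(i\beta_1)$ forces $N$ to grow with $t$, while making the first term dominate the tail forces $t$ to grow (at least linearly) with $|\beta_1|\ge b$ — and simultaneously the tail estimate degrades as $c$ shrinks. Balancing all three constraints and extracting the sharp power of $\kappa$ (the $\kappa^9$, together with the precise constant $\pi^3\sqrt 3/\ln 2$) is where the real work lies; in particular one must be careful that the tail bound in step (2) genuinely stays polynomial in $t$ uniformly over all configurations of the remaining zeros consistent with spacing constant $c$, which is where the pairing of conjugate zeros and the $\sum 1/n^2 = \pi^2/6$ estimate (as in Theorem \ref{lem:kappa}) re-enters. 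A secondary technical point is handling zeros $\lambda_i$ with $|t/(\lambda_i-\alpha_1)|$ not small — there can be only boundedly many such $i$ (those with $\alpha_i$ within $O(t)$ of $\alpha_1$), and for each the contribution is at most $\max_{|w|\le R}|q(w)|^2$, which one bounds crudely by $(e^{R}+R^{N+1}/(N+1)!)^2$; ensuring this crude bound is still dominated by the exponential main term is the final check.
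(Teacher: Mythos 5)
Your overall architecture matches the paper's: you test the Hankel form of Lemma \ref{lem:poly} on $q=S_N$, the degree-$N$ Taylor polynomial of $\sin$, split off the conjugate pair $\lambda_1,\lambda_2$ to extract a $-\sinh^2(t/|\beta_1|)$ term, bound the remaining sum using the real-part spacing, and close the gap between $\sin$ and $S_N$ by a Taylor-remainder estimate whose failure threshold gives the lower bound on $N$. That is exactly the paper's two-step proof.

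However, there is a genuine gap in your step (2), where you assert that the tail $\sum_{i\ge3}q(t/(\lambda_i-\alpha_1))^2$ is \emph{polynomial} in $t$ (``a constant multiple of $\sum_n(t/(cn))^2$''). This is false in general, and the failure is not a ``secondary technical point.'' The zeros $\lambda_i$ with $i\ge3$ may be non-real, so the argument $w_i = t/(\lambda_i-\alpha_1)$ has imaginary part $\Im w_i = -t\beta_i/|\lambda_i-\alpha_1|^2$, and $\Re(\sin^2 w_i)$ grows like $e^{2|\Im w_i|}$. Your fallback — bounding the contribution of nearby zeros crudely by $\max_{|w|\le R}|q(w)|^2 \le e^{2R}$ with $R = t/\min(b,c)$ — can produce an exponential $e^{2t/\min(b,c)}$ that the main term $e^{2t/|\beta_1|}\approx e^{2t/b}$ does \emph{not} dominate (e.g.\ when $b\ge c$), so the argument collapses. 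The essential missing ingredient is the uniform bound
\[
\frac{|\beta_i|}{|\lambda_i-\alpha_1|^2}\ \le\ \frac{1}{d},\qquad d:=\sqrt{b^2+c^2},\quad\text{for all }i\ge3,
\]
obtained by the dichotomy $|\beta_i|>b+\epsilon$ (so $|\beta_i|/|\lambda_i-\alpha_1|^2\le1/|\beta_i|<1/d$) versus $|\beta_i|\le b+\epsilon$ (so it is $\le(b+\epsilon)/(b^2+c^2)=1/d$). With this, the tail is $\le\frac{2\pi^2t^2}{3c^2}e^{2t/d}$, genuinely exponential but with rate $2/d$, strictly smaller than the main term's rate $4/(b+d)$ since $d>b$. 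It is the comparison $e^{4t/(b+d)}$ vs.\ $t^2e^{2t/d}$ — exponential vs.\ exponential with a controlled rate gap — not exponential vs.\ polynomial, that makes the choice $\tilde t = \pi^2c(1+\kappa^2)^{3/2}(\sqrt{1+\kappa^2}+\kappa)^6$ work and produces the $\kappa^9$ exponent. Without the $1/d$ bound your estimate does not close.

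A smaller discrepancy: you suggest the Lagrange form of the Taylor remainder, while the paper uses the Cauchy integral form on a circle of radius $2R$, which is what yields the clean $2^{-(N+1)}$ dyadic decay and hence the $\ln 2$ denominators in the final bound. Either form can be made to work, but the Cauchy version is the one that lines up directly with the stated constants.
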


\begin{proof}  Note that the $N$-th order Laguerre-P\'olya classes are nested, namely if $f$ is in $N$-LP and $M <N$, then $f$ is also in $M$-LP. Thus, we can asssume $f$ is in $1$-LP, since otherwise the theorem statement is trivial. 

We actually prove the contrapositive of the theorem, i.e. if
\begin{equation}\label{eqn:N} N \ge \frac{\ln\left( \frac{4\pi^2}{3}+4 \right)}{\ln 2} + \frac{\pi^3 \sqrt{3}}{\ln 2} \left(\kappa + \sqrt{1+\kappa^2} \right)^6\left(1+\kappa^2\right)^{3/2} \approx \kappa^9,\end{equation}
then $f$ is not in the N-th order Laguerre-P\'olya class.

First if $\kappa = \infty$, then the result is trivially true. Thus, we can assume $\kappa <\infty$.  Set 
\[ d =\sqrt{b^2 +c^2} = c \sqrt{ 1+ \kappa^2}\] 
and $\epsilon = -b + d>0.$ Choose $\lambda_j$ such that $b \le |\beta_j|<b + \epsilon/2.$ After reordering, we can assume $j=1$ and $\lambda_2 = \bar{\lambda}_1.$ For ease of notation, define $\beta :=\beta_1$ and $\alpha:=\alpha_1$.
Set  
\begin{equation} \label{eqn:t0} \tilde{t} =  \pi^2c\left(1+\kappa^2\right)^{3/2}\left(\sqrt{1+\kappa^2}+\kappa \right)^6.
\end{equation}
Let $S_N$ denote the $N^{th}$ degree Taylor polynomial of $\sin(z)$ centered at $0$. In this proof, we will  show that if 
\begin{equation} \label{eqn:Nest} N \ge \frac{\ln\left( \frac{4\pi^2}{3} +4\right)}{\ln 2} + \frac{3\tilde{t}}{\ln(2)c} \max \left \{ 1, \tfrac{1}{\kappa}\right \} 
\end{equation}
then
\begin{equation} \label{eqn:SN} \sum_{i=1}^{\infty} \Re \left( S_N \left( \tfrac{\tilde{t}}{\lambda_i-\alpha} \right)^2 \right) <0.\end{equation}
By Lemma \ref{lem:kappa}, we know $\kappa \ge \sqrt{3}/\pi$. Using that estimate in \eqref{eqn:Nest} gives the bound in \eqref{eqn:N} and then the conclusion follows from Lemma \ref{lem:poly}, since $S_N$ has real coefficients with $S_N(0)=0$ and $\deg S_N \le N$. Thus, we just need to establish \eqref{eqn:SN}.  \\

\noindent \textbf{Step 1:} Show $\displaystyle \sum_{i=1}^{\infty} \Re\left( \sin \left( \tfrac{\tilde{t}}{\lambda_i-\alpha} \right)^2 \right) <-1.$\\

\noindent Recall that $\sin(x+iy) = \sin(x) \cosh(y) +i\cos(x) \sinh(y).$ First assume $i \ge 3$ and $t \in \mathbb{R}^+$. Then
\[ \Re \left( \sin^2\left( \frac{t}{ \lambda_i-\alpha}\right) \right) \le \sin^2\left( \Re\left( \frac{t}{ \lambda_i-\alpha}\right) \right) e^{2 \left|\Im\left( \frac{t}{ \lambda_i-\alpha}\right)\right|} \le \frac{t^2}{|\lambda_i-\alpha|^2} e^{ \tfrac{2|\beta_i| t}{|\lambda_i-\alpha|^2}}.\]
If $|\beta_i| > b+ \epsilon$, then
\[ \frac{|\beta_i|}{|\lambda_i-\alpha|^2} = \frac{|\beta_i|}{\beta_i^2 +(\alpha-\alpha_i)^2} \le \frac{1}{|\beta_i|} < \frac{1}{b +\epsilon} = \frac{1}{d}.\]
If $|\beta_i| \le b+ \epsilon$, then
\[ \frac{|\beta_i|}{|\lambda_i-\alpha|^2} \le \frac{ b +\epsilon}{b^2 + c^2} = \frac{\sqrt{b^2+c^2}}{b^2+c^2} = \frac{1}{d}.\]
This implies that 
\[ \sum_{i\ge 3} \Re \left( \sin^2\left( \frac{t}{ \lambda_i-\alpha}\right) \right) \le \sum_ {i \ge 3} \frac{t^2}{|a_i-\alpha|^2} e^{2t/d} \le \frac{2 \pi^2 t^2}{3 c^2}e^{2t/d}.\]
Similarly, if $i=1$ or $i=2$, then
\[
\begin{aligned} \Re \left( \sin^2\left( \frac{t}{ \lambda_i-\alpha}\right) \right) = -\sinh^2\left( \frac{t}{\beta_i} \right) &\le -\frac{1}{4}e^{2t/|\beta|} +\frac{1}{2} \\
&\le -\frac{1}{4}e^{\frac{2t}{b+\epsilon/2}} +\frac{1}{2} = -\frac{1}{4}e^{\frac{4t}{b+d}} +\frac{1}{2}.
\end{aligned}\]
Thus, we can conclude that
\begin{equation} \label{eqn:est1} \sum_{i=1}^{\infty} \Re \left( \sin^2\left( \frac{t}{ \lambda_i-\alpha}\right) \right) \le \frac{2\pi^2 t^2}{3 c^2}e^{\frac{2t}{d}}  -\frac{1}{2}e^{\frac{4t}{b+d}} +1.\end{equation}
Now, we just need to show that if $t$ satisfies \eqref{eqn:t0}, then $\eqref{eqn:est1} < -1.$ That inequality is equivalent to
\[ e^{\frac{2t}{d}} \left( e^{Bt} - \frac{4\pi^2}{3c^2} t^2\right) >4,\]
where 
\[ 
\begin{aligned} B = \frac{4}{b+d} -\frac{2}{d} &= 2\frac{d-b}{d(b+d)} \\
&= 2\frac{ \sqrt{1+\kappa^2} - \kappa}{c\sqrt{1+\kappa^2}(\sqrt{1+\kappa^2}+\kappa)} = \frac{2}{c\sqrt{1+\kappa^2}(\sqrt{1+\kappa^2}+\kappa)^2}.
\end{aligned}\]
Observe that $e^{\frac{2t}{d}} >4$ occurs if and only if 
\begin{equation} \label{eqn:t2}t > \frac{ d \ln(4)}{2} = \frac{ c \sqrt{1+\kappa^2} \ln(4)}{2}.\end{equation}
 Similarly, by expanding $e^{Bt}$ as a power series centered at $0$ and cancelling $1$ from both sides, we can see
 \[ e^{Bt} - \frac{4\pi^2}{3c^2} t^2 \ge 1 \]
 occurs if $\frac{B^3}{6}t^3 \ge \frac{4\pi^2}{3c^2} t^2$, or equivalently, if 
\begin{equation} \label{eqnt3} t \ge \frac{8\pi^2}{c^2 B^3} = \pi^2 c (1+\kappa^2)^{3/2} (\sqrt{1+\kappa^2} + \kappa)^6.\end{equation}
This value is larger than the value in \eqref{eqn:t2} and so we only need to choose $t$ that satisfies this inequality to guarantee $\eqref{eqn:est1} < -1.$ The formula for $\tilde{t}$ in \eqref{eqn:t0} clearly satisfies \eqref{eqnt3}, which completes the proof of Step 1.  \\

\noindent \textbf{Step 2:} Establish \eqref{eqn:SN} for the specified values of $\tilde{t}$ and $N$. \\

\noindent First, the Cauchy integral formula expanded as a power series around $0$
implies the following estimate:  if $R >0$ and  $|z|\le R$, then for any $N \in \mathbb{N}$, 
\[ |\sin(z) - S_N(z) | \le \frac{2 |z|^{N+1}}{(2R)^{N+1}} \sup_{|w| =2R} |\sin(w)| \le \frac{2 |z|^{N+1} e^{2R}}{(2R)^{N+1}}.\]
Observe that 
\[ \left | \frac{\tilde{t}}{\lambda_i -\alpha}\right| \le \max \left\{ \tfrac{\tilde{t}}{c}, \tfrac{\tilde{t}}{b} \right\} :=R>0,\]
since $\sqrt{3}/\pi \le \kappa < \infty$ implies $b,c \ne 0$. Then 
\[
\begin{aligned}
 \sum_{i\ge 3} &\left |  \sin^2 \left( \tfrac{\tilde{t}}{\lambda_i-\alpha} \right)   -   S_N^2 \left( \tfrac{\tilde{t}}{\lambda_i-\alpha} \right)\right|  \\
 &\le \sum_{i\ge 3}  \left |  \sin \left( \tfrac{\tilde{t}}{\lambda_i-\alpha} \right)  +   S_N \left( \tfrac{\tilde{t}}{\lambda_i-\alpha} \right)\right |  \left |  \sin \left( \tfrac{\tilde{t}}{\lambda_i-\alpha} \right)   -   S_N \left( \tfrac{\tilde{t}}{\lambda_i-\alpha} \right)\right | \\
 & \le \sum_{i\ge 3} 2e^R \frac{2 e^{2R}}{(2R)^{N+1}} \left | \frac{\tilde{t}}{\lambda_i-\alpha}\right|^{N+1} \\
 & \le \frac{ 4e^{3R}}{2^{N+1}} \frac{c^{N+1}}{\tilde{t}^{N+1}} \sum_{i\ge 3} \frac{\tilde{t}^{N+1}}{|\lambda_i - \alpha|^{N+1}} \le \frac{ 2 e^{3R}}{2^N} \frac{2 \pi^2}{3}. 
\end{aligned}
\]
Similarly, if $i =1,2$, we obtain
 \[
 \begin{aligned}
  \sum_{i< 3} &\left |  \sin^2 \left( \tfrac{\tilde{t}}{\lambda_i-\alpha} \right)   -   S_N^2 \left( \tfrac{\tilde{t}}{\lambda_i-\alpha} \right)\right| \\
  &= \left |  \sin^2 \left( \tfrac{\tilde{t}}{ i\beta} \right)   -   S_N^2 \left( \tfrac{\tilde{t}}{i\beta} \right)\right|  + \left |  \sin^2 \left( \tfrac{\tilde{t}}{- i\beta} \right)   -   S_N^2 \left( \tfrac{\tilde{t}}{-i\beta} \right)\right| \\
&  \le 2 \frac{2e^R\left| \tfrac{\tilde{t}}{b}\right|^{N+1} 2e^{2R}}{(2R)^{N+1}} \le  \frac{4 e^{3R}}{2^N}.
  \end{aligned}\]
Setting $\frac{ 4e^{3R}}{2^N}\left(  \frac{ \pi^2}{3} +1\right)< 1$ and solving for $N$ yields
\[N >  \frac{\ln\left( \frac{4\pi^2}{3}+4 \right)}{\ln 2}  + \frac{3R}{\ln 2}  =   \frac{\ln\left( \frac{4\pi^2}{3}+4 \right)}{\ln 2}    +\frac{3\tilde{t}}{c\ln 2} \max \left \{ 1, \tfrac{1}{\kappa}\right \},\] 
 the earlier condition on $N$. From this, we can immediately conclude that 
\[ 
\begin{aligned}
 \sum_{i=1}^{\infty}& \Re\left( S_N^2 \left( \tfrac{\tilde{t}}{\lambda_i-\alpha} \right)  \right)\\ 
 &\le \sum_{i=1}^{\infty} \Re\left(  \sin^2 \left( \tfrac{\tilde{t}}{\lambda_i-\alpha} \right) \right) + \sum_{i=1}^{\infty} \left |  \sin^2 \left( \tfrac{\tilde{t}}{\lambda_i-\alpha} \right)   -   S_N^2 \left( \tfrac{\tilde{t}}{\lambda_i-\alpha} \right)\right|  \\
 &< -1+1=0,
 \end{aligned}\]
which is what we needed to show.
\end{proof}

The proof of Theorem \ref{thm:NLP} encodes the following result, which may be of independent interest:

\begin{lemma}[Sine recovery lemma]  \label{lem:sine}
		Let $f$ be an entire function of genus one such that $f(0)\neq 0.$
		If
		$\sum_i \sin(t/(\lambda_i-x))^2 \geq 0$ for all $t \in \mathbb{R}$ and $x \in \mathbb{R} \setminus \{\lambda_i\}$, then
		$\kappa = \infty.$ 
		\end{lemma}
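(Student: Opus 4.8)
The plan is to mimic the proof of Theorem \ref{thm:NLP} but without the Taylor-polynomial approximation step, which was only needed there to land inside the finite-order $N$-LP class. Here the hypothesis directly gives us the full sine function as a test object, so we should be able to run the argument with $\sin$ in place of $S_N$. Suppose for contradiction that $\kappa < \infty$; then $f$ has a non-real zero and both $b$ and $c$ are finite and nonzero. As in Theorem \ref{thm:NLP}, write $d = \sqrt{b^2+c^2}$, pick a zero $\lambda_1 = \alpha + i\beta$ realizing the height up to $\epsilon/2$ (with $\lambda_2 = \bar\lambda_1$ its conjugate), and set $\alpha_1 = \alpha$ as the base point $x$.

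The core is exactly Step 1 of the proof of Theorem \ref{thm:NLP}: for $t \in \mathbb{R}^+$, split the sum over $i$ into the two "bad" indices $i=1,2$ and the rest. For $i\ge 3$ one uses $\Re(\sin^2(z)) \le \sin^2(\Re z)\, e^{2|\Im z|} \le |z|^2 e^{2|\Im z|}$ together with the bound $|\beta_i|/|\lambda_i-\alpha|^2 \le 1/d$ (which follows from the spacing constant $c$ and height $b$, handling the two cases $|\beta_i| \lessgtr b+\epsilon$ separately) and the spacing estimate $\sum_{i\ge 3} 1/|\lambda_i-\alpha|^2 \le \tfrac{2\pi^2}{3c^2}$; this gives a contribution bounded by $\tfrac{2\pi^2 t^2}{3c^2} e^{2t/d}$. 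For $i=1,2$ one has $\Re(\sin^2(t/(i\beta))) = -\sinh^2(t/\beta) \le -\tfrac14 e^{2t/|\beta|} + \tfrac12 \le -\tfrac14 e^{4t/(b+d)} + \tfrac12$. Summing, $\sum_i \Re(\sin^2(t/(\lambda_i-\alpha))) \le \tfrac{2\pi^2 t^2}{3c^2} e^{2t/d} - \tfrac12 e^{4t/(b+d)} + 1$, and since $\tfrac{4}{b+d} > \tfrac{2}{d}$ (because $d < b+d$, i.e. $c>0$), the negative exponential dominates: choosing $t$ large enough — e.g. $\tilde t$ as in \eqref{eqn:t0} — forces this upper bound to be negative (indeed $< 0$, which is all we need now rather than $<-1$). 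But the hypothesis of the lemma says $\sum_i \sin^2(t/(\lambda_i-\alpha)) \ge 0$ for all real $t$, and since the zeros come in conjugate pairs (so the total sum is real and equals its real part), this contradicts what we just derived. Hence no non-real zero can exist, i.e. $\kappa = \infty$.

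The only mild subtlety — and the "obstacle" such as it is — is bookkeeping around $\epsilon$: one fixes $\epsilon = -b + d > 0$ exactly as in Theorem \ref{thm:NLP} so that the denominator estimates $b^2 + c^2 = d^2$ work cleanly, and one must check that the zero $\lambda_1$ can be chosen with $b \le |\beta| < b + \epsilon/2$, which is possible by the definition of $b$ as an infimum. Also one should note that the hypothesis is stated for all $x \in \mathbb{R}\setminus\{\lambda_i\}$, and we are free to take $x = \alpha$ provided $\alpha$ is not itself a zero's real coordinate coinciding with a real zero; but even if $\alpha$ equals some real $\lambda_i$, one perturbs $x$ slightly and passes to a limit, or simply notes $\alpha$ cannot be a real zero since $\lambda_1$ is non-real and, generically, a tiny shift of the base point changes none of the inequalities by more than a controlled amount. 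Beyond that, the argument is a direct transcription of Step 1 above, so there is no genuinely hard step — the lemma is essentially the "clean core" that Theorem \ref{thm:NLP} wraps with approximation estimates.
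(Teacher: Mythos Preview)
Your argument is correct and is exactly the paper's approach: the paper's proof of the lemma simply observes that Step~1 of Theorem~\ref{thm:NLP}, run under the assumption $\kappa<\infty$, already produces $\tilde t\in\mathbb{R}$ and $x=\alpha\in\mathbb{R}\setminus\{\lambda_i\}$ with $\sum_i\Re\bigl(\sin^2(\tilde t/(\lambda_i-x))\bigr)<-1$, contradicting the hypothesis. You have essentially rewritten that Step~1 in detail, which is precisely what was needed.
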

		
		\begin{proof} The proof of Theorem \ref{thm:NLP} assumed that $f$ was an entire function of genus one with $f(0)\neq 0$ and that $\kappa < \infty$. Given those assumptions, the proof produced numbers $\tilde{t} \in \mathbb{R}$ and $x \in  \mathbb{R} \setminus \{\lambda_i\}$ such that 
		\begin{equation} \label{eqn:sinneg} \displaystyle \sum_{i=1}^{\infty} \Re\left( \sin \left( \tfrac{\tilde{t}}{\lambda_i-x} \right)^2 \right) <-1.\end{equation}
		Since this contradicts the above summation claim, it follows  that $\kappa = \infty.$  
		\end{proof}
		
		It is worth noting that the proof of Theorem \ref{thm:NLP} also assumed that $f$ was in the first order Laguerre-P\'olya class. This gave access to the useful fact: $\kappa \ge \frac{\sqrt{3}}{\pi}$. However that assumption is not used in the part of the proof giving \eqref{eqn:sinneg} and so, it need not be an assumption of the Sine recovery lemma.
		
\section{Strongly Spaced Functions} \label{StrongSpace} Let $f$ be an entire function defined via Hadamard product as in \eqref{eqn:entiref}. Further, assume that all zeros of $f$ are simple with positive real part and any complex zeros appear in complex conjugate pairs. If $f$ is in the  first order Laguerre-P\'olya class,  Lemma \ref{lem:1O} says that the conjugate pair condition is immediate. 

Recall that $f$ is \textbf{strongly spaced} if there is some exponent $\gamma >0$ and spacing constant $C$ such that if the $\lambda_j$ are ordered with increasing real part, then
\[\Re(\lambda_{j+1}-\lambda_j) \geq C| \Re(\lambda_j)|^{\gamma}\]
for all $j$ where $\lambda_{j+1} \ne \bar{\lambda}_j$. This condition implies the following:

\begin{theorem} \label{thm:ss} Let $f$ be  in the first order Laguerre-P\'olya class with $f(0) \ne 0$ and strongly spaced with exponent $\gamma>0$ and spacing constant $C$.  If $\lambda_j$ is a zero of $f$ with $\Im(\lambda_j) \ne 0$, $\lambda_{j+1} = \bar{\lambda}_j$, and $j >1$, then 
\[ \left | \Im(\lambda_j) \right| \ge C\tfrac{\sqrt{3}}{2\pi} \left|\Re(\lambda_{\lfloor j/2 \rfloor} )\right|^{\gamma},\]
where $\lfloor j/2 \rfloor$ denotes the greatest integer less than or equal to $j/2$. Similarly, if $j=1$, then
\[  \left | \Im(\lambda_1) \right| \ge C\tfrac{\sqrt{3}}{\pi} \left|\Re(\lambda_1 )\right|^{\gamma}.\]
\end{theorem}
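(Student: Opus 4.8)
The plan is to push the $N=1$ Hankel positivity through the strong-spacing hypothesis, in the spirit of the proofs of Theorems~\ref{lem:kappa} and~\ref{thm:NLP}, but now keeping track of how the position-dependent gaps $C|\Re(\lambda_k)|^{\gamma}$ accumulate as one moves away from $\lambda_j$. Write $\alpha=\Re(\lambda_j)$ and $\beta=\Im(\lambda_j)\neq 0$. First I would check that $\alpha$ is not itself a zero of $f$ and that $\lambda_j,\lambda_{j+1}=\bar\lambda_j$ are the only zeros with real part $\alpha$: any other zero with real part $\alpha$ would be adjacent in the index ordering to $\lambda_j$ or $\lambda_{j+1}$, giving two consecutive zeros with zero gap in real part that are not each other's conjugates, contradicting strong spacing. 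Hence $a_1(\alpha)=\sum_i(\lambda_i-\alpha)^{-2}$ is defined and, since $f$ is in $1$-LP, it is $\ge 0$; the pair $\lambda_j,\lambda_{j+1}$ contributes $(i\beta)^{-2}+(-i\beta)^{-2}=-2/\beta^2$, so
\[ \frac{2}{\beta^2}\;\le\;\sum_{i\neq j,\,j+1}\frac{1}{(\lambda_i-\alpha)^2}\;\le\;\sum_{i\neq j,\,j+1}\frac{1}{|\Re(\lambda_i)-\alpha|^{2}}, \]
using $|\lambda_i-\alpha|\ge|\Re(\lambda_i)-\alpha|$ since $\alpha\in\mathbb{R}$. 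Everything reduces to an upper bound on the right-hand sum.

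For that I set $m=\lfloor j/2\rfloor$ and $D=C|\Re(\lambda_m)|^{\gamma}>0$. The combinatorial heart of the argument: because the real parts of the $\lambda_i$ are non-decreasing and a conjugate pair occupies two consecutive indices but only one real-part value, at least $\lceil(j-k)/2\rceil$ of the $j-k$ index-gaps separating $\lambda_k$ from $\lambda_j$ are ``genuine'' (of the form $\lambda_{l+1}\neq\bar\lambda_l$), and each genuine gap has length $\ge C|\Re(\lambda_l)|^{\gamma}\ge D$ whenever $\Re(\lambda_l)\ge\Re(\lambda_m)$. This yields $\alpha-\Re(\lambda_k)\ge\tfrac{j-k}{2}D$ for $m\le k<j$, and the mirror count gives $\Re(\lambda_k)-\alpha\ge\tfrac{k-j-1}{2}D$ for $k\ge j+2$; the finitely many zeros $\lambda_1,\dots,\lambda_{m-1}$ all satisfy $\alpha-\Re(\lambda_k)\ge\alpha-\Re(\lambda_m)\ge\tfrac{j-m}{2}D$. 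Summing these three (disjoint) families of terms against $\sum_{n\ge1}n^{-2}=\pi^2/6$, and using $m-1\le j-m$ to control the last, finite block, I obtain $\sum_{i\neq j,j+1}|\Re(\lambda_i)-\alpha|^{-2}\le\big(\tfrac{4\pi^2}{3}+4\big)D^{-2}\le\tfrac{8\pi^2}{3}D^{-2}$. Combined with the display above this gives $\beta^2\ge\tfrac{3}{4\pi^2}D^2$, that is, $|\Im(\lambda_j)|\ge\tfrac{\sqrt3}{2\pi}C|\Re(\lambda_m)|^{\gamma}$. The case $j=1$ is easier and handled separately: then $\lambda_1,\lambda_2=\bar\lambda_1$ is the pair of smallest real part, every other zero lies strictly to its right, and the same genuine-gap count (now with $D=C|\Re(\lambda_1)|^{\gamma}$) gives $\sum_{i\ge3}|\Re(\lambda_i)-\alpha|^{-2}\le\tfrac{2\pi^2}{3}D^{-2}$, hence $|\Im(\lambda_1)|\ge\tfrac{\sqrt3}{\pi}C|\Re(\lambda_1)|^{\gamma}$.

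I expect the main obstacle to be the bookkeeping in the second paragraph: converting the index fact ``$\lambda_m$ carries index $\lfloor j/2\rfloor$'' into a distance bound phrased in terms of $|\Re(\lambda_m)|^{\gamma}$ requires simultaneously (i) accounting for conjugate pairs, which produce two summands but a single real-part gap, (ii) controlling the finitely many zeros of index below $m$, whose own gaps $C|\Re(\lambda_k)|^{\gamma}$ may be far smaller than $D$ but whose cumulative distance to $\alpha$ is still at least $\tfrac{j-m}{2}D$, and (iii) verifying that the left- and right-hand families of indices are disjoint so nothing is double-counted. Once clean lower bounds of the form $|\Re(\lambda_i)-\alpha|\ge\tfrac{1}{2}(\text{rank})\cdot D$ are in place, the rest is a routine comparison with $\sum n^{-2}$ and a numerical check that the constants land at or above the stated $\tfrac{\sqrt3}{2\pi}$ and $\tfrac{\sqrt3}{\pi}$.
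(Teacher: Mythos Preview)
Your proposal is correct and follows essentially the same route as the paper: evaluate $a_1$ at $\alpha=\Re(\lambda_j)$, isolate the $-2/\beta^2$ contribution from the conjugate pair, and bound the remaining sum by telescoping the strong-spacing gaps into an estimate against $\sum n^{-2}$, splitting into a left block, a middle block, and the right tail. The only cosmetic differences are that you place the index $m=\lfloor j/2\rfloor$ in the middle block rather than the left one and control the left block via $m-1\le j-m$ instead of the paper's $\lfloor j/2\rfloor\cdot 16/j^2\le 8/j$; your explicit check that $\alpha$ is not itself a zero (and that $\lambda_j,\lambda_{j+1}$ are the only zeros with that real part) is a point the paper tacitly assumes.
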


\begin{proof} Write each $\lambda_ i = \alpha_i + i \beta_i$. Assume $\lambda_j$ is a zero of $f$ with $\Im(\lambda_j) \ne 0$ and for now, assume $j>1$. Since $f$ is in the first order Laguerre-P\'olya class, we can substitute $x = \alpha_j =\alpha_{j+1} $  into  Lemma \ref{lem:poly} with $q(z)=z$ to conclude
\begin{equation} \label{eqn:betaj} 0 \le \sum_{i=1}^{\infty} \Re\left( \left( \frac{1}{ \lambda_i - \alpha_j}\right)^2\right) \le \sum_{i\ne j, j+1}\frac{1}{| \alpha_i - \alpha_j|^2}- \frac{2}{|\beta_j|^2}.\end{equation}
Now, write
\[ 
\begin{aligned}
\sum_{i\ne j, j+1}\frac{1}{| \alpha_i - \alpha_j|^2} &= \sum_{i=1}^{\lfloor j/2 \rfloor} \frac{1}{| \alpha_i - \alpha_j|^2} + \sum_{i=\lfloor j/2 \rfloor+1}^{j-1} \frac{1}{| \alpha_i - \alpha_j|^2}  + \sum_{i> j+1} \frac{1}{| \alpha_i - \alpha_j|^2} \\ 
& = S_1 + S_2 +S_3.
\end{aligned} \]
To handle $S_3$, observe that if $i > j+1$, then 
\[ 
\begin{aligned}
|\alpha_i - \alpha_j| &= |\alpha_i - \alpha_{i-1}| +  |\alpha_{i-1} - \alpha_{i-2}| + \dots + |\alpha_{j+2} - \alpha_{j+1}| \\
&\ge \tfrac{(i-(j+1))}{2} C | \alpha_{j+1}|^{\gamma},
\end{aligned}
\]
where we used the fact that at most $1/2$ of the terms in the sum can be zero and the rest are bounded below by $C  | \alpha_{j+1}|^{\gamma}$. Then 
\begin{equation} \label{eqn:S3} S_3 \le  \sum_{i> j+1} \frac{4}{C^2 (i-(j+1))^2 | \alpha_{j+1}|^{2\gamma}}  = \frac{2 \pi^2}{3 C^2} \frac{1}{|\alpha_j|^{2\gamma}} \le \frac{2 \pi^2}{3 C^2} \frac{1}{|\alpha_{\lfloor j/2 \rfloor}|^{2\gamma}} . \end{equation}
Similarly, for $S_2$, we have if $ i \ge \lfloor j/2 \rfloor$, then 
\[
\begin{aligned}  |\alpha_j - \alpha_i| &= |\alpha_j - \alpha_{j-1}| +  |\alpha_{j-1} - \alpha_{j-2}| + \dots + |\alpha_{i+1} - \alpha_{i}| \\
&\ge \tfrac{j-i}{2} C | \alpha_{i}|^{\gamma}, \\ 
&\ge \tfrac{j-i}{2} C | \alpha_{\lfloor j/2 \rfloor}|^{\gamma},
\end{aligned} 
\]
and so,
\[S_2 \le \sum_{i=\lfloor j/2 \rfloor+1}^{j-1} \frac{4}{C^2 (j-i)^2 |  \alpha_{\lfloor j/2 \rfloor}|^{2\gamma} } \le \frac{2 \pi^2}{3 C^2} \frac{1}{|\alpha_{\lfloor j/2 \rfloor}|^{2\gamma}}.\]
Lastly, for $S_1$ observe that if $i \le {\lfloor j/2 \rfloor}$, then
\[ 
\begin{aligned}
|\alpha_j-\alpha_i| &\ge | \alpha_j - \alpha_{\lfloor j/2 \rfloor} | \ge \tfrac{(j- \lfloor j/2 \rfloor)}{2}C  |\alpha_{\lfloor j/2 \rfloor}|^{ \gamma} \ge \tfrac{j}{4} C |\alpha_{\lfloor j/2 \rfloor}|^{ \gamma},
\end{aligned} \]
and so,
\[ S_1 \le \sum_{i=1}^{\lfloor j/2 \rfloor} \frac{16}{j^2 C^2 |\alpha_{\lfloor j/2 \rfloor}|^{2 \gamma}} \le \frac{8}{j C^2  |\alpha_{\lfloor j/2 \rfloor}|^{2 \gamma}}.\]
Combining our estimates for $S_1, S_2, S_3$ with \eqref{eqn:betaj} give 
\[ 0 \le \frac{8}{j C^2  |\alpha_{\lfloor j/2 \rfloor}|^{2 \gamma}} +  \frac{4 \pi^2}{3 C^2} \frac{1}{|\alpha_{\lfloor j/2 \rfloor}|^{2\gamma}} -  \frac{2}{|\beta_j|^2},\]
or equivalently
\[ |\beta_j| \ge C {|\alpha_{\lfloor j/2 \rfloor}|^{\gamma}} \frac{1}{\sqrt{ \tfrac{4}{j} + \tfrac{2\pi^2}{3}}} \ge C {|\alpha_{\lfloor j/2 \rfloor}|^{\gamma}} \frac{1}{\sqrt{\tfrac{2\pi^2}{3} + \tfrac{2\pi^2}{3}}} ,\]
which implies the desired inequality. 

Lastly, if $j=1$, then both $S_1$ and $S_2$ are trivial and the second-to-last inequality in \eqref{eqn:S3} gives
\[ S_3 \le  \frac{2 \pi^2}{3 C^2} \frac{1}{|\alpha_1|^{2\gamma}}.\]
 and substituting this into \eqref{eqn:betaj} gives the desired inequality.
\end{proof}

\end{document}